\numberwithin{equation}{section}
\newcommand{\field}[1]{\mathbb{#1}}
\newcommand{\R}{\field{R}}
\newcommand{\C}{\field{C}}
\newcommand{\N}{\field{N}}
 \def\cC{\mathscr{C}}
 \def\cF{\mathscr{F}}
\def\cL{\mathscr{L}}
\def\cJ{\mathscr{J}}
\def\cO{\mathscr{O}}
\def\cP{\mathscr{P}}
\def\mH{\mathcal{H}}
\def\mJ{\mathcal{J}}
\def\mO{\mathcal{O}}
\def\mR{\mathcal{R}}
\newcommand{\bb}{\boldsymbol{b}}
\DeclareMathOperator{\End}{End}
\DeclareMathOperator{\Ker}{Ker}
\DeclareMathOperator{\td}{Td}
\newcommand{\om}{\omega}
\newtheorem{thm}{Theorem}[section]
\newtheorem{lemma}[thm]{Lemma}
\theoremstyle{definition}
\theoremstyle{definition}
\newcommand{\be}{\begin{eqnarray}}
\newcommand{\ee}{\end{eqnarray}}
\newcommand{\ov}{\overline}
\newcommand{\p}{\partial}
\newcommand{\comment}[1]{}
\begin{document}

\title[Optimal convergence speed of Bergman metrics]
{Optimal convergence speed of Bergman metrics on
symplectic manifolds}

\author{Wen Lu}

\address{School of Mathematics and Statistics,
Huazhong University of Science and Technology,
Wuhan 430074, China}
\email{wlu@hust.edu.cn}
\thanks{W.\ L.\ supported by National Natural Science Foundation
of China (Grant Nos. 11401232, 11871233)}

\author{Xiaonan Ma}

\address{
Universit{\'e} de Paris \&Universit{\'e} Paris Diderot - Paris 7,
UFR de Math{\'e}matiques, Case 7012,
75205 Paris Cedex 13, France}
\email{xiaonan.ma@imj-prg.fr}
\thanks{X.\ M.\ partially supported by
NNSFC No. 11829102 and
funded through the Institutional Strategy of
the University of Cologne within the German Excellence Initiative}
\author{George Marinescu}

\address{Univerisit\"at zu K\"oln, Mathematisches institut,
Weyertal 86-90, 50931 K\"oln, Germany
\&Institute of Mathematics 'Simion Stoilow',
Romanian Academy, Bucharest, Romania}
\email{gmarines@math.uni-koeln.de}
\thanks{G.\ M.\ partially supported by DFG funded
project SFB TRR 191}

\keywords{Symplectic Kodaira embedding,
Bergman metric, Bergman kernel asymptotics,
renormalized Bochner Laplacian}
\subjclass[2010]{58J60,  53D50}

\date{\today}

\begin{abstract}
It is known that a compact symplectic manifold endowed with
a prequantum line bundle can be embedded in the projective
space generated by
the eigensections of low energy of the Bochner Laplacian
acting on high $p$-tensor powers of the prequantum
line bundle. We show that the Fubini-Study forms induced by
these embeddings converge
at speed rate $1/p^{2}$ to the symplectic form.
This result implies the generalization to the almost-K\"ahler case of
the lower bounds on the Calabi functional given by Donaldson for
K\"ahler manifolds,
as shown by Lejmi and Keller.
\end{abstract}

\maketitle
\setcounter{section}{-1}

\section{Introduction} \label{s0}
A very useful tool in the study of canonical K\"ahler metrics
is the use of Bergman
metrics to approximate arbitrary K\"ahler metrics in a given
integral cohomology class, see e.\,g., \cite{Do01,Ma10,Tian}.

Let $(X,\om)$ be a compact K\"ahler manifold endowed with
a Hermitian holomorphic line bundle $(L,h^L)$
such that $\frac{\sqrt{-1}}{2\pi}R^{L}=\omega$.
Since the bundle $L$ is positive, Kodaira's theorem shows
that high powers $L^p$ give rise to
holomorphic embeddings $\Phi_p:X\to\mathbb{P}(H^0(X,L^p)^*)$.
The \emph{Bergman form} $\omega_p$ at level $p$
is defined as the rescaled induced Fubini-Study form
$\frac 1p \Phi_p^*\omega_{_{\rm FS}}$,
where $\omega_{_{\rm FS}}$
is the natural Fubini-Study form on $\mathbb{P}(H^0(X,L^p)^*)$.
Tian \cite{Tian} showed that $\omega_p$
converges to $\om$ in the $\cC^2$ topology with speed rate
$p^{-1/2}$, as $p\to\infty$, that is, there exists $C>0$ such that
for any $p\in\N^*$ we have
\begin{equation}\label{0.6a}
\Big|\frac{1}{p}\Phi_{p}^{\ast}(\omega_{_{\rm FS}})
-\omega\Big|_{\cC^2}\leqslant  \frac{C}{p^{1/2}}\,\cdot
\end{equation}
 This was improved by Ruan \cite{Ruan98} to convergence in
 $\cC^\infty$ with speed rate $p^{-1}$
(see also \cite[Theorem 5.1.4]{MM07}).
Tian's result was motivated by a problem of Yau \cite{Yau87}.

The process described above can be seen in the general framework
of quantization. The Bergman forms $\omega_p$ can be thought as
quantization at level $p$ of the original K\"ahler form $\omega$.
The number $1/p$ is to be thought of as analogous to
Planck's constant and in the
semiclassical limit $p\to\infty$ the quantized objects $\omega_p$
converge to the original K\"ahler one.

The proof of the convergence in \cite{Ruan98,Tian} is based
on the diagonal expansion of the
Bergman kernel up to second order.
A full diagonal asymptotic expansion of the Bergman kernel
in powers of $p$ in the $\cC^\infty$
topology was obtained by Catlin \cite{Catlin} and
Zelditch \cite{Zelditch98} as an application of
Boutet de Monvel and Sj\"{o}strand's work \cite{BouSt76},
see also \cite{DLM06,MM08a} for different approaches and
generalizations.
We refer to \cite{MM07} for a comprehensive study of several
analytic and geometric aspects of Bergman kernel.
One advantage of the expansion in the
$\cC^\infty$ topology is that it easily implies the convergence
of the Bergman forms $\om_p$
to $\om$ with speed rate $p^{-2}$, see \cite[(5.1.23)]{MM07}.
This convergence speed is optimal.
Note that the scalar curvature is
up to a multiplicative constant the
coefficient of the second term of the Bergman kernel expansion.
The purpose of this paper is to extend this optimal
result to the case of symplectic manifolds.

The Bergman kernel of a holomorphic line bundle $L$ on
a complex manifold is the smooth kernel of the orthogonal
projection from the space of square integrable sections
on the space of holomorphic sections,
or, equivalently, on the kernel of the Kodaira Laplacian
$\Box^L= \ov{\p}^{L}\,\ov{\p}^{L*}
+ \ov{\p}^{L*}\,\ov{\p}^{L}$ on $L$.
In order to find a suitable notion
of ``holomorphic section'' of a prequantum line bundle on
a compact \emph{symplectic manifold},
Guillemin and Uribe \cite{GU} introduced a renormalized
Bochner Laplacian $\Delta_{p, 0}$ (cf.\,\eqref{0.4a})
which reduces to $2\Box ^L$ in the K\"ahler case.

We describe this construction in detail. Let $(X, \omega)$ be
a compact symplectic
manifold of real dimension $2n$. Let $(L, h^{L})$ be a
Hermitian line bundle on $X$, and let $\nabla^{L}$ be
a Hermitian connection on $(L, h^{L})$ with
the curvature $R^{L}=(\nabla^{L})^{2}$. We will assume throughout
the paper that $(L, h^{L},\nabla^{L})$ is a prequantum line bundle
 of $(X, \omega)$, i.e.,   
\begin{equation}\label{0.1}
\frac{\sqrt{-1}}{2\pi}R^{L}=\omega.
\end{equation}
We choose an almost complex structure $J$ such that
$\omega$ is $J$-invariant and $\omega(\cdot, J\cdot)>0$.
The almost complex structure $J$
induces a splitting
$TX\otimes_{\R}\C=T^{(1, 0)}X\oplus T^{(0, 1)}X$,
where $T^{(1, 0)}X$ and $T^{(0, 1)}X$
are the eigenbundles of $J$ corresponding to the eigenvalues
$\sqrt{-1}$ and $-\sqrt{-1}$, respectively.

Let $g^{TX}(\cdot, \cdot):=w(\cdot, J\cdot)$ be the
Riemannian metric on $TX$ induced by $\omega$ and $J$.
The Riemannian volume form $dv_{X}$ of $(X, g^{TX})$ has
the form $dv_{X}=\omega^{n}/n!$. The
$L^{2}$-Hermitian product on the space $\cC^{\infty}(X, L^{p})$
of smooth sections
of $L^{p}$ on $X$, with $L^{p}:=L^{\otimes p}$, is given by
\begin{equation}\label{lus0.2}
\big\langle s_{1}, s_{2}\big\rangle
=\int_{X}\big\langle s_{1}, s_{2}\big\rangle(x)dv_{X}(x).
\end{equation}
Let $\nabla^{TX}$ be the Levi-Civita connection on $(X, g^{TX})$
with curvature $R^{TX}$, and let $\nabla^{L^{p}}$
be the connection on $L^{p}$ induced by $\nabla^{L}$.
Let $\{e_{k}\}$ be a local orthonormal frame of $(TX, g^{TX})$.
The Bochner Laplacian acting on $\cC^{\infty}(X, L^{p})$ is given by
\begin{equation}
\Delta^{L^{p}}=-\sum_{k}\Big[\big(\nabla_{e_{k}}^{L^{p}}\big)^{2}
-\nabla_{\nabla_{e_{k}}^{TX}e_{k}}^{L^{p}}\Big].
\end{equation}
Given $\Phi\in \cC^{\infty}(X, \R)$, the renormalized
Bochner Laplacian is defined by
\begin{align}\label{0.4a}
\Delta_{p, \Phi}=\Delta^{L^{p}}-2\pi np+\Phi.
\end{align}
By \cite{GU}, \cite[Corollary\,1.2]{MM02}, there exists
$C_{L}>0$ independent of $p$ such that
\begin{align}\label{0.4}
\textup{Spec}(\Delta_{p, \Phi})\subset [-C_{L}, C_{L}]
\cup [4\pi p-C_{L}, +\infty),
\end{align}
where $\textup{Spec}(A)$ denotes the spectrum of the operator
$A$. Since $\Delta_{p, \Phi}$ is an elliptic operator on
a compact manifold, it has discrete spectrum and its eigensections
are smooth. Let $\mH_{p}$
be the direct sum of eigenspaces of $\Delta_{p, \Phi}$
corresponding to the
eigenvalues lying in $[-C_{L}, C_{L}]$.
In mathematical physics terms, the operator $\Delta_{p, \Phi}$
is a semiclassical Schr\"odinger
operator and the space $\mH_p$ is the space of its bound states
as $p\to\infty$. The space $\mH_p$ proves to be
an appropriate replacement for the space of holomorphic sections
$H^0(X,L^p)$ from the K\"ahler case. In particular, we have
for $p$ large enough (cf.\ \cite[(8.3.3)]{MM07}),
\begin{align}\label{0.4b}
\dim \mH_{p}=\int_{X}\td(T^{(1, 0)}X)e^{p\omega},
\end{align}
where
$\td(T^{(1, 0)}X)$ is the Todd class of $T^{(1, 0)}X$,
which corresponds to the Riemann-Roch-Hirzebruch
 formula from complex geometry.

Let $\mathbb{P}(\mH^{\ast}_{p})$ be the projective space
associated to the dual space
of $\mH_{p}$; we identify $\mathbb{P}(\mH^{\ast}_{p})$
with the Grassmannian of hyperplanes in $\mH_{p}$.
The base locus of $\mH_{p}$ is the set
$\textup{Bl}(\mH_{p})=\big\{x\in X: s(x)=0$
for all $s\in \mH_{p}\big\}$. We define the Kodaira map
\begin{equation}\label{0.5}
\Phi_{p}: X  \backslash  \textup{Bl}(\mH_{p})\rightarrow
\mathbb{P}(\mH^{\ast}_{p}),
\quad \Phi_{p}(x) =\big\{s\in \mH_{p}: s(x)=0\big\}\,,
\end{equation}
which sends $x\in X\backslash \textup{Bl}(\mH)$ to the hyperplane
of sections vanishing at $x$.
Note that $\mH_{p}$ is endowed with the induced
$L^{2}$ Hermitian product (\ref{lus0.2}) so there is a well-defined
Fubini-Study metric $g_{_{\rm FS}}$ on
$\mathbb{P}(\mH^{\ast}_{p})$ with the associated form
$\omega_{_{\rm FS}}$.

The symplectic Kodaira embedding theorem
\cite[Theorem\,3.6]{MM08a}, \cite[Theorem\,8.3.12]{MM07},
states that for large $p$ the Kodaira maps
$\Phi_{p}: X\rightarrow \mathbb{P}(\mH^{\ast}_{p})$ are
embeddings and the Bergman forms converge to
the symplectic form with speed rate $p^{-1}$.
We note that in this case the
near-diagonal expansion of the Bergman kernel is essential for
the proof, in contrast to the the K\"ahler case,
where the diagonal expansion already implies the result.
Let us also observe that
\cite[Theorem\,3.6]{MM08a} and \cite[Theorem\,8.3.12]{MM07}
are valid in a more general context,
namely when $g^{TX}$ is an arbitrary $J$-invariant Riemannian metric.

There exists in the literature another replacement of the notion
of holomorphic section, see e.\,g., \cite{BorU00,ShZe02}. It is
based on a construction by
Boutet de Monvel and Guillemin \cite{BouGou81}
of a first-order pseudodifferential
operator $D_b$ 
on the circle bundle of  $L^*$.
 The associated Szeg\H{o} kernels are well defined
modulo smooth operators on the associated circle bundle,
even though $D_b$ is neither canonically defined nor unique.
Indeed, Boutet de Monvel--Guillemin define the Szeg\H{o} kernels first,
and construct the operator $D_b$ from the Szeg\H{o} kernels.
For these spaces the Bergman forms converge to
the symplectic form with speed rate $p^{-1}$, too.

The main result of this paper is as follows.
\begin{thm}\label{t0.1}
Let $(X,\om)$ be a compact symplectic manifold and $(L,h^L)$
be a Hermitian line bundle endowed with
a Hermitian connection $\nabla^{L}$ such that
$\frac{\sqrt{-1}}{2\pi}R^{L}=\omega$ holds.
Let $J$ be an almost complex structure on $TX$ such that
$g^{TX}(\cdot, \cdot):=\om(\cdot, J\cdot)$ is a  $J$-invariant
Riemannian metric on $TX$.
Then for any $\ell\in\N$, there exists $C_{\ell}>0$ such that
\begin{equation}\label{0.6}
\Big|\frac{1}{p}\Phi_{p}^{\ast}(\omega_{_{\rm FS}})
-\omega\Big|_{\cC^{\ell}}\leqslant  \frac{C_{\ell}}{p^{2}}\,,
\end{equation}
where $\Phi_p$ is the Kodaira map \eqref{0.5}
defined by the space $\mH_p$ of bound states
of the renormalized Bochner Laplacian
$\Delta_{p, \Phi}$ 
associated with $g^{TX}, \nabla^L,\Phi$ in (\ref{0.4a}).
\end{thm}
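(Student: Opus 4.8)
The plan is to reduce everything to the near-diagonal asymptotics of the Bergman kernel $P_p(x,y)$, the smooth kernel of the orthogonal projection onto $\mH_p$, and to show that the coefficient of $p^0$ in the resulting expansion of $\Phi_p^*(\omega_{_{\rm FS}})$ vanishes. First I would record an intrinsic formula for the pullback. Choosing an orthonormal basis $\{s_i\}$ of $\mH_p$, writing $\Phi_p=\pi\circ\sigma$ with $\sigma=(s_1,\dots,s_N)$ in a local frame of $L^p$ and $\pi\colon\C^N\setminus\{0\}\to\mathbb{P}^{N-1}$, and pulling back $\pi^*\omega_{_{\rm FS}}=\frac{\sqrt{-1}}{2\pi}\partial\bar\partial\log\|z\|^2$, a direct computation in which the connection one-form cancels yields the frame-independent identity
\begin{equation*}
\Phi_p^*(\omega_{_{\rm FS}}) = \frac{\sqrt{-1}}{2\pi}\left(\frac{\sum_i\nabla s_i\wedge\ov{\nabla s_i}}{P_p} - \frac{\big(\sum_i\ov{s_i}\,\nabla s_i\big)\wedge\big(\sum_j s_j\,\ov{\nabla s_j}\big)}{P_p^{\,2}}\right),
\end{equation*}
where $P_p=P_p(x,x)=\sum_i|s_i|^2$, $\nabla=\nabla^{L^p}$, and all products are paired by $h^p$. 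Since $X$ carries no integrable complex structure, $\Phi_p$ is only smooth, so after splitting $\nabla s_i=\nabla^{1,0}s_i+\nabla^{0,1}s_i$ the right-hand side acquires $(2,0)$- and $(0,2)$-parts, quadratic in the anti-holomorphic derivatives $\nabla^{0,1}s_i$, which are absent in the K\"ahler case; the $(1,1)$-part alone would reproduce the familiar $p\,\om+\frac{\sqrt{-1}}{2\pi}\partial\bar\partial\log P_p$.

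Next I would insert the full near-diagonal expansion of $P_p$ for the renormalized Bochner Laplacian, established in \cite{MM07} and by the formal-model method of \cite{DLM06}: in normal coordinates about $x_0$ with the rescaling $Z=\sqrt p\,Z'$,
\begin{equation*}
p^{-n}P_p\big(Z/\sqrt p,\,Z'/\sqrt p\big)=\sum_{r=0}^{k}\cF_r(Z,Z')\,p^{-r/2}+O\big(p^{-(k+1)/2}\big)
\end{equation*}
in every $\cC^\ell$-norm, where $\cF_0=\cP$ is the model (Bargmann--Fock) projection and each coefficient obeys the parity $\cF_r(-Z,-Z')=(-1)^r\cF_r(Z,Z')$. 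The whole expression above is a fixed contraction of two first covariant derivatives of $P_p(x,y)$ at $y=x$; a term built from $a$ derivatives in $x$ and $b$ in $y$ acting on $\cF_r$ and evaluated at the origin survives the parity only when $a+b+r$ is even, and then carries the integer power $p^{(a+b-r)/2}$ of $p$. With $a+b=2$ this forces $r$ even, so \emph{all half-integer powers cancel} and
\begin{equation*}
\Phi_p^*(\omega_{_{\rm FS}})=p\,\om+\beta_0+\beta_1\,p^{-1}+\cdots,
\end{equation*}
a genuine expansion in integer powers of $p^{-1}$. Consequently $\frac1p\Phi_p^*(\omega_{_{\rm FS}})-\om=\beta_0\,p^{-1}+O(p^{-2})$, and the entire theorem reduces to proving $\beta_0=0$.

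The crux is therefore this single cancellation. The $(1,1)$-contribution to $\beta_0$ collapses, exactly as in the Tian--Ruan argument, to $\frac{\sqrt{-1}}{2\pi}\partial\bar\partial\log\cP(0,0)$, which vanishes because for the metric $g^{TX}=\om(\cdot,J\cdot)$ of the theorem the leading diagonal coefficient $\cP(0,0)=1$ is constant; this is precisely where the special choice of metric is used, and it fails for a general $J$-invariant $g^{TX}$. The remaining, genuinely new contributions to $\beta_0$ come from the $(2,0)+(0,2)$-parts and the subleading $(1,1)$-part quadratic in $\nabla^{0,1}s_i$; by the above they are governed by fixed second-order covariant derivatives of the coefficient $\cF_2$ at the origin, the leading model term dropping out because $\cP$ is holomorphic in its first and anti-holomorphic in its second argument. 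The main obstacle is to show that these $\cF_2$-contributions cancel. I would do this by solving the recursive transport equations of \cite{MM07,DLM06} for $\cF_2$ explicitly enough to evaluate the relevant derivatives at $(0,0)$, and verifying the resulting algebraic identity; this step has no counterpart in the K\"ahler analysis, where only the diagonal of $P_p$ enters.

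Finally I would promote the order-by-order vanishing to the uniform estimate \eqref{0.6}. The spectral gap \eqref{0.4} guarantees that $P_p$ is a smooth projection kernel with Gaussian off-diagonal decay and that the expansion above, together with all its derivatives in the base point $x_0$, holds with remainders bounded in every $\cC^\ell$-norm uniformly in $x_0\in X$. Differentiating in $x_0$ commutes with these bounds and produces the constants $C_\ell$; combined with $\beta_0=0$ this yields \eqref{0.6}.
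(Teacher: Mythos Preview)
Your overall strategy matches the paper's: reduce to the near-diagonal expansion of the generalized Bergman kernel, use parity to eliminate half-integer powers, and then show that the coefficient $\beta_0$ of $p^0$ in $\Phi_p^*(\omega_{_{\rm FS}})$ vanishes. But your breakdown of $\beta_0$ contains two real gaps.

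First, the claim that the $(1,1)$-part of $\beta_0$ collapses ``exactly as in the Tian--Ruan argument'' to $\frac{\sqrt{-1}}{2\pi}\partial\bar\partial\log\cP(0,0)$ is not valid in the almost-complex setting. The K\"ahler identity $\Phi_p^*(\omega_{_{\rm FS}}) = p\,\omega + \frac{\sqrt{-1}}{2\pi}\partial\bar\partial\log P_p(x,x)$ relies on $\Phi_p$ being holomorphic, so that pullback commutes with $\partial\bar\partial$; here $\Phi_p$ is only smooth, and there is no reason for the piece of your formula built from $\nabla^{1,0}s_i$ alone to reduce to the \emph{diagonal} of $P_p$. (This is precisely the point the paper stresses: in the symplectic case the near-diagonal, not merely diagonal, expansion is essential.) What the computation actually gives, in the paper's notation (eq.~\eqref{3.10}), is
\[
\beta_0 \;=\; \frac{\sqrt{-1}}{2\pi}\big(d_xd_y\cF_2 - d_x\cF_1\wedge d_y\cF_1\big)(0,0) \;-\; \bb_1(x_0)\,\omega(x_0),
\]
where the $-\bb_1\omega$ term comes from expanding $1/P_p$ against the leading numerator $d_xd_y\cF_0$, not from any $\nabla^{0,1}$ correction. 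The cancellation to be proved is therefore not that ``$\cF_2$-contributions cancel among themselves'' but rather the identity $\frac{\sqrt{-1}}{2\pi}(d_xd_y\cF_2)(0,0) = \bb_1(x_0)\,\omega(x_0)$ (Theorem~\ref{t3.1}); this is the heart of the paper and occupies all of Section~\ref{s4}.

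Second, your parity argument as stated applies only to a single factor $\cF_r$, so it does not exclude the bilinear contribution $d_x\cF_1\wedge d_y\cF_1$ at order $p^0$: each factor has one derivative on $\cF_1$, so $a+b=1$, $r=1$, and $a+b+r=2$ is even. The paper eliminates this term in Lemma~\ref{t3.1a}, and the mechanism is \emph{not} parity: one uses that $\langle(\nabla^X_{\,\cdot}J)\,\cdot\,,\,\cdot\,\rangle$ is of pure type $(T^{*(1,0)}X)^{\otimes 3}\oplus(T^{*(0,1)}X)^{\otimes 3}$ (Lemma~\ref{4.9}) to show that the polynomial $J_1$ is homogeneous of degree three in $(Z,Z')$, whence $d_x\cF_1(0,0)=d_y\cF_1(0,0)=0$. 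Your proposal to ``solve the transport equations for $\cF_2$'' would eventually recover all of this, but the shortcut you propose for the $(1,1)$-part does not work, and you have misidentified which terms must cancel against which.
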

The proof is based on the near diagonal expansion of the
Bergman kernel of $\mH_p$ from \cite{MM07,MM08a}.
The sharp bound of $\cO(p^{-2})$ is due to some remarkable
cancellations of the coefficients in this expansions,
reminiscent of the local properties of the curvature of
K\"ahler metrics.

The main motivation for approximating K\"ahler metrics
by Fubini-Study metrics arises from questions about the
existence and uniqueness of K\"ahler metrics of
constant scalar curvature, or more generally,
K\"ahler-Einstein metrics, see \cite{Do01,Do05,Tian,Yau87}.
It is natural to study such questions also in the symplectic framework,
for example, it is interesting to generalize to the almost-K\"ahler case
the lower bounds on the
Calabi functional given by Donaldson \cite{Do05}.
This is done by Lejmi and Keller \cite{LK}.
Theorem \ref{t0.1} plays a crucial role in their proof in the symplectic case.

The organization of this paper is as follows. In Section \ref{s1},
we recall the formal calculus
on $\C^{n}$ for the model operator $\cL$ (cf. (\ref{lus1.1a})),
which is the main ingredient of our approach.
In Section \ref{s2}, we review the asymptotic expansion
of the generalized Bergman
kernel. In Section \ref{s3}, we reduce the proof of
Theorem \ref{t0.1} to Theorem \ref{t3.1}.
In Section \ref{s4}, we prove Theorem \ref{t3.1}
and thus finish the proof of Theorem \ref{t0.1}.

We shall use the following notations.
For $\alpha=(\alpha_{1}, \ldots, \alpha_{n})\in\N^{n}$,
$z\in \C^{n}$, we set $|\alpha|=\sum^{n}_{j=1}\alpha_{j}$,
$\alpha!=\prod_{j}(\alpha_{j}!)$
and $z^{\alpha}:=z_{1}^{\alpha_{1}}\cdots z^{\alpha_{n}}_{n}$.
Moreover, when an index variable appears twice in a single term,
it means that we are summing over all
its possible values.

\medskip

\noindent{\bf Acknowledgments}. We would like to thank
Mehdi Lejmi, Julien Keller and  G\'abor Sz\'ekelyhidi for motivating
and helpful discussions which led to the writing of this paper.

\section{Kernel calculus on $\C^{n}$}\label{s1}

In this section, we recall the formal calculus on $\C^{n}$ for
the model operator $\cL$ introduced
in \cite[\S\,1.4]{MM08a}, \cite[\S\,4.1.6]{MM07} (with $a_{j}=2\pi$
therein).
This calculus is the main ingredient of our approach.

Let us consider the canonical coordinates $(Z_{1}, \ldots, Z_{2n})$
on the real vector space $\R^{2n}$. On the
complex vector space $\C^{n}$ we consider the complex
coordinates $(z_{1}, \ldots, z_{n})$.
The two sets of coordinates are linked by the relation
$z_{j}=Z_{2j-1}+\sqrt{-1}Z_{2j}$, $j=1, \ldots, n$.

We consider the $L^{2}$-norm
\begin{align}\label{luw1.1}
\|\boldsymbol\cdot\|_{L^{2}}
=\Big(\int_{\R^{2n}}|\,\boldsymbol\cdot\,|^{2}dZ\Big)^{1/2}\
\textup{on}\ \R^{2n},
\end{align} where
$dZ=dZ_{1}\ldots dZ_{2n}$ is the Lebesgue measure.
We define the differential operators:
\begin{align}\label{lus1.1a}
b_{j}=-2\frac{\p}{\p z_{j}}+\pi\ov{z}_{j},\ \
b^{+}_{j}=2\frac{\p}{\p \ov{z}_{j}}+\pi z_{j},
\ \ b=(b_{1}, \ldots, b_{n}),\ \ \cL=\sum^{n}_{j=1}b_{j}b^{+}_{j},
\end{align}
which extend to closed densely defined operators on
$\big(L^{2}(\R^{2n}), \|\boldsymbol\cdot\|_{L^{2}}\big)$.
As such, $b^{+}_{j}$ is the adjoint of $b_{j}$ and $\cL$ defines
as a densely defined self-adjoint operator on
$\big(L^{2}(\R^{2n}), \|\boldsymbol\cdot\|_{L^{2}}\big)$.
The following result was established in \cite[Theorem\,1.15]{MM08a}
(cf. also \cite[Theorem\,4.1.20]{MM07}).

\begin{thm}\label{lut1.1}
The spectrum of $\cL$ on $L^{2}(\R^{2n})$ is given by
\begin{align}\label{lus1.2a}
\textup{Spec}(\cL)=\big\{4\pi|\alpha|:  \alpha\in \N^{n}\big\},
\end{align}
and an orthogonal basis of the eigenspace of $4\pi|\alpha|$
is given by
\begin{align}\label{lus1.3a}
b^{\alpha}\big(z^{\beta}
\exp\big(-\pi\sum_{j}|z_{j}|^{2}/2\big)\big),\ \
\textup{with}\ \beta\in \N^{n}.
\end{align}
In particular, an orthonormal basis of $\Ker(\cL)$ is
\begin{align}\label{lus1.4a}
\bigg\{\phi_{\beta}(z)
=\Big(\frac{\pi^{|\beta|}}{\beta!}\Big)^{1/2}z^{\beta}
e^{-\pi\sum_{j}|z_{j}|^{2}/2}: \beta\in \N^{n}\bigg\}.
\end{align}
\end{thm}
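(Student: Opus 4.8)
The plan is to treat $\cL=\sum_j b_jb_j^+$ as a disguised system of $n$ independent harmonic oscillators and to read off its spectrum from the ladder structure of $b_j,b_j^+$. First I would record the commutation relations. Using $\p_{z_j}z_k=\delta_{jk}$, $\p_{z_j}\bar z_k=0$, $\p_{\bar z_j}\bar z_k=\delta_{jk}$ and $\p_{\bar z_j}z_k=0$, a single direct computation gives
\begin{equation}
[b_j,b_k]=[b^+_j,b^+_k]=0,\qquad [b_j,b^+_k]=-4\pi\,\delta_{jk}.
\end{equation}
Everything else is then algebra: from these relations one obtains $[\cL,b_j]=4\pi b_j$ and $[\cL,b^+_j]=-4\pi b^+_j$, so $b_j$ raises an eigenvalue of $\cL$ by $4\pi$ and $b^+_j$ lowers it by $4\pi$.

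Second, I would locate the ground states. Since $\langle\cL f,f\rangle=\sum_j\|b^+_jf\|_{L^2}^2\ge 0$, we have $\cL\ge 0$ and $\Ker(\cL)=\bigcap_j\Ker(b^+_j)$. Writing $f=g\,e^{-\pi\sum_j|z_j|^2/2}$, the equation $b^+_jf=0$ reduces to $\p_{\bar z_j}g=0$; hence $\Ker(\cL)$ is the Bargmann--Fock space of $L^2$ holomorphic functions with Gaussian weight, whose monomials $z^\beta$ form an orthogonal basis. Normalizing produces the orthonormal basis $\{\phi_\beta\}$ of $\Ker(\cL)$ and establishes \eqref{lus1.4a}.

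Third, I would generate all higher eigenspaces by applying the raising operators and settle orthogonality. Set $N_j:=\tfrac{1}{4\pi}b_jb^+_j$; these are commuting, self-adjoint, nonnegative operators with $\cL=4\pi\sum_jN_j$ and $[N_j,b_k]=\delta_{jk}b_k$. Thus for $\phi_\beta\in\Ker(\cL)$ the vector $b^\alpha\phi_\beta$ is a joint eigenvector of the $N_j$ with eigenvalues $\alpha_j$, so $\cL\, b^\alpha\phi_\beta=4\pi|\alpha|\,b^\alpha\phi_\beta$, yielding the eigenvalues in \eqref{lus1.2a} and the eigenvectors in \eqref{lus1.3a}. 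Orthogonality is then automatic: vectors attached to different $\alpha$ are separated by the joint spectrum of the commuting self-adjoint operators $N_j$, while for fixed $\alpha$ one moves $b^\alpha$ onto its adjoint $(b^+)^\alpha$ and uses $b^+_j\phi_\beta=0$ together with the one-mode relation $(b^+_j)^m(b_j)^m\phi=(4\pi)^mm!\,\phi$ to get
\begin{equation}
\langle b^\alpha\phi_\beta,\,b^\alpha\phi_{\beta'}\rangle=(4\pi)^{|\alpha|}\alpha!\,\delta_{\beta\beta'},
\end{equation}
which in particular shows $b^\alpha\phi_\beta\neq 0$. Hence for each $m$ the family $\{b^\alpha\phi_\beta:|\alpha|=m,\ \beta\in\N^n\}$ is an orthogonal system inside the eigenspace of $4\pi m$.

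The main obstacle I expect is \emph{completeness}: the ladder argument alone shows these are eigenfunctions but not that they exhaust $L^2(\R^{2n})$. I would prove they span the dense subspace of polynomials times the Gaussian. Iterating the elementary identity
\begin{equation}
b_j\big(\bar z^\gamma z^\beta e^{-\pi\sum_k|z_k|^2/2}\big)=2\pi\,\bar z^{\gamma+e_j}z^\beta e^{-\pi\sum_k|z_k|^2/2}-2\beta_j\,\bar z^\gamma z^{\beta-e_j}e^{-\pi\sum_k|z_k|^2/2},
\end{equation}
one sees that $b^\alpha(z^\beta e^{-\pi\sum_k|z_k|^2/2})$ equals $(2\pi)^{|\alpha|}\bar z^\alpha z^\beta e^{-\pi\sum_k|z_k|^2/2}$ plus terms of strictly lower degree in $\bar z$. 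Grading the polynomial-times-Gaussian functions by total $\bar z$-degree, this is a triangular change of basis with nonzero diagonal, so $\{b^\alpha(z^\beta e^{-\pi\sum_k|z_k|^2/2})\}$ spans the same space as $\{\bar z^\alpha z^\beta e^{-\pi\sum_k|z_k|^2/2}\}$. The latter are, up to normalization, the Hermite functions and are dense in $L^2(\R^{2n})$; since $\cL$ is self-adjoint, the eigenvectors constructed above form a complete orthogonal system, which forces the spectrum to be exactly \eqref{lus1.2a} with the stated eigenbases.
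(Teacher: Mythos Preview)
Your proof is correct and complete. The paper itself does not prove this theorem at all: it simply cites \cite[Theorem\,1.15]{MM08a} and \cite[Theorem\,4.1.20]{MM07} and moves on. Your ladder-operator argument is essentially the standard one that appears in those references; the commutation relations you derive coincide with the paper's equation \eqref{1.6a}, and your identification of $\Ker(\cL)$ with the Bargmann--Fock space is exactly the mechanism behind \eqref{lus1.4a}. The triangularity argument for completeness (reducing to density of polynomials times the Gaussian) is a clean way to handle the point that the paper's cited sources treat by other means; it is perfectly valid here since self-adjointness of $\cL$ is already asserted in the paper before the theorem is stated.
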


Let $\cP(Z, Z')$ denote the kernel of the orthogonal projection
$\cP: L^{2}(\R^{2n})\rightarrow \Ker(\cL)$ with
respect to $dZ'$. Set $\cP^{\bot}=\textup{Id}-\cP$.

Obviously
$\cP(Z, Z')=\sum_{\beta}\phi_{\beta}(z)\ov{\phi_{\beta}(z')}$,
so we infer from (\ref{lus1.4a}) that
\begin{align}\label{lus1.5a}
\cP(Z, Z')=\exp\Big(-\frac{\pi}{2}\sum_{j=1}^{n}
\big(|z_{j}|^{2}+|z'_{j}|^{2}-2z_{j}\ov{z}'_{j}\big)\Big).
\end{align}
By (\ref{lus1.1a}) and (\ref{lus1.5a}), we obtain
\begin{align}\label{1.6}
\big(b^{+}_{j}\cP\big)(Z, Z')=0,\ \ \big(b_{j}\cP\big)(Z, Z')
=2\pi(\ov{z}_{j}-\ov{z}'_{j})\cP(Z, Z').
\end{align}
The following commutation relations are very useful in the
computations. Namely, for any polynomial $g(z, \ov{z})$ in
$z$ and $\ov{z}$, we have
\begin{equation}\label{1.6a}
\begin{split}
[b_{j}, b^{+}_{k}]&=
b_{j}b^{+}_{k}-b^{+}_{k}b_{j}=-4\pi\delta_{jk},\\
[b_{j}, b_{k}]&=[b^{+}_{j}, b^{+}_{k}]=0,\\
[g(z, \ov{z}), b_{j}]&=
2\frac{\p}{\p z_{j}}g(z, \ov{z}),  \\
[g(z, \ov{z}), b^{+}_{j}]&=
-2\frac{\p}{\p\ov{z}_{j}}g(z, \ov{z}).
\end{split}
\end{equation}
For a polynomial $F$ in $Z, Z'$, we denote by $F\cP$ the operator
on $L^{2}(\R^{2n})$ defined by  the kernel
$F(Z, Z')\cP(Z, Z')$ and the volume form $dZ$.

In the calculations involving the kernel
$\cP(\boldsymbol\cdot, \boldsymbol\cdot)$,
we prefer however to use the orthogonal decomposition of
$L^{2}(\R^{2n})$ given in Theorem \ref{lut1.1} and the fact that
$\cP$ is an orthogonal projection, rather than
integrating against the expression \eqref{lus1.5a} of
$\cP(\boldsymbol\cdot\,, \boldsymbol\cdot)$.
This point of view leads to streamlined computations
and to a better understanding of the operators involved. As an example,
Theorem \ref{lut1.1} implies that
\begin{eqnarray}\label{lus1.7a}
\big(\cP b^{\alpha}z^{\beta}\cP\big)(Z, Z')=
\begin{cases}
\big(z^{\beta}\cP\big)(Z, Z'),\  &\textup{if}\ |\alpha|=0, \\
0,\ \ &\textup{if}\ |\alpha|>0.
\end{cases}
\end{eqnarray}
We will also identify $z$ to
$\sum_{j}z_{j}\frac{\p}{\p z_{j}}$ and
$\ov{z}$ to
$\sum_{j}\ov{z}_{j}\frac{\p}{\p \ov{z}_{j}}$
when we consider $z$ and $\ov{z}$ as vector fields,
and
\begin{align}\label{lus1.8}
\mR=\sum_{j}Z_{j}\frac{\p}{\p Z_{j}}=z+\ov{z}=Z.
\end{align}

\section{Asymptotic expansion of the
generalized Bergman kernel}\label{s2}

Let $a^{X}$ be the injectivity radius
of $(X, g^{TX})$. We denote by
$B^{X}(x, \varepsilon)$ and $B^{T_{x}X}(0, \varepsilon)$
the open balls in $X$ and $T_{x}X$ with center $x$
and radius $\varepsilon$, respectively. Then the exponential map
$T_{x}X\ni Z\rightarrow \textup{exp}^{X}_{x}(Z)\in X$
is a diffeomorphism from $B^{T_{x}X}(0, \varepsilon)$ onto
$B^{X}(x, \varepsilon)$ for $\varepsilon\leqslant a^{X}$.
From now on, we identify $B^{T_{x}X}(0, \varepsilon)$ with
$B^{X}(x, \varepsilon)$ via the exponential map
for $\varepsilon\leqslant a^{X}$.

We fix $x_{0}\in X$.
For $Z\in B^{T_{x_{0}}X}$ we identify $(L_{Z}, h_{Z}^{L})$
 to $(L_{x_{0}}, h_{x_{0}}^{L})$ by parallel transport with respect
 to the connection $\nabla^{L}$ along the curve
$\gamma_{Z}: [0, 1]\ni u\rightarrow \textup{exp}^{X}_{x_{0}}(uZ)$.

In general, for functions in normal coordinates, we will
add a subscript $x_{0}$ to indicate the base point $x_{0}\in X$.
Similarly, $P_{\mH_{p}}(x, y)$ induces in terms of
the above trivialization (note that $\End({L^{p}_{x_{0}}})=\C$)
a smooth function
\[
\big\{(Z, Z')\in TX\times_{X}TX: |Z|, |Z'|<\varepsilon\big\}
\ni (Z, Z')\longmapsto P_{\mH_{p}, x_{0}}(Z, Z')\in \C\,,
\]
which also depends smoothly on the parameter $x_{0}$.

Let us choose an orthonormal
basis $\{w_{j}\}^{n}_{j=1}$ of $T_{x_{0}}^{(1, 0)}X$.
Then $e_{2j-1}=\frac{1}{\sqrt{2}}(w_{j}+\ov{w}_{j})$
and $e_{2j}=\frac{\sqrt{-1}}{\sqrt{2}}(w_{j}-\ov{w}_{j})$,
$j=1, \ldots, n$, forms an orthonormal basis of $T_{x_{0}}X$.
We use coordinates on $T_{x_{0}}X\simeq \R^{2n}$ given
by the identification
\begin{align}\label{2.1}
\R^{2n}\ni (Z_{1}, \ldots, Z_{2n}) \longmapsto
\sum^{2n}_{j=1}Z_{j}e_{j}\in T_{x_{0}}X.
\end{align}
In the sequel we also use complex coordinates
$z=(z_{1}, \ldots, z_{n})$ on $\C^{n}\simeq \R^{2n}$.

Let $dv_{TX}$ be the Riemannian
volume form on $(T_{x_{0}}X, g^{T_{x_{0}}X})$.
Let $\kappa_{x_{0}}: T_{x_{0}}X\rightarrow \R$,
$Z\mapsto \kappa_{x_{0}}(Z)$  be a smooth positive
function defined by
\begin{align}\label{2.2}
dv_{X}(Z)=\kappa_{x_{0}}(Z)dv_{TX}(Z),\ \ \kappa_{x_{0}}(0)=1,
\end{align}
where the subscript $x_{0}$ of $\kappa_{x_{0}}(Z)$ indicates
the base point $x_{0}\in X$.

{\bf Rescaling $\Delta_{p, \Phi}$ and Taylor expansion.}
For $s\in \cC^{\infty}(\R^{2n}, \C)$, $Z\in \R^{2n}$,
$|Z|\leqslant \varepsilon$, and for $t=\frac{1}{\sqrt{p}}$, set
\begin{align}\label{2.3}
\big(S_{t}s\big)(Z):= s(Z/t),\ \ \ \cL_{t}:=
S^{-1}_{t}\kappa^{1/2}t^{2}\Delta_{p, \Phi}\kappa^{-1/2}S_{t}.
\end{align}
For $U\in T_{x_{0}}X$, we denote $\nabla_{U}$ the ordinary
differential in direction $U$. Set
\begin{align}
\nabla_{0, \bullet}=\nabla_{\bullet}
+\frac{1}{2}R_{x_{0}}^{L}(Z, \boldsymbol\cdot),\ \
\cL_{0}=-\sum^{2n}_{j=1}(\nabla_{0, e_{j}})^{2}-2n\pi
=\sum^{n}_{j=1}b_{j}b^{+}_{j}=\cL.
\end{align}

\noindent
By \cite[Theorem\,1.4]{MM08a}, there exist second order
differential operators $\mO_{r}$ such that we
have an asymptotic expansion in $t$ when $t\rightarrow 0$,
\begin{align}\label{2.4}
\cL_{t}=\cL_{0}+\sum^{m}_{r=1}t^{r}\mO_{r}+\cO(t^{m+1}).
\end{align}
Moreover,
\begin{align}\label{2.5}
\mO_{1}(Z)=&-\frac{2}{3}\big(\partial_{j}R^{L}\big)_{x_{0}}
(\mR, e_{i})Z_{j}\nabla_{0, e_{i}}
-\frac{1}{3}\big(\partial_{i}R^{L}\big)_{x_{0}}(\mR, e_{i}),
\end{align}
and
\begin{equation}\label{2.6}
\begin{split}
\mO_{2}(Z)&=\frac{1}{3}
\Big\langle R^{TX}_{x_{0}}(\mR, e_{i})\mR, e_{j} \Big\rangle_{x_{0}}
\nabla_{0, e_{i}}\nabla_{0, e_{j}}\\
 &\qquad+\Big[\frac{2}{3}
 \Big\langle R_{x_{0}}^{TX}(\mR, e_{j})e_{j},
e_{i}\Big\rangle_{x_{0}}
-\frac{1}{2}\sum_{|\alpha|=2}(\partial^{\alpha}R^{L})_{x_{0}}
\frac{Z^{\alpha}}{\alpha!}(\mR, e_{i}) \Big]\nabla_{0, e_{i}}\\
 &\qquad-\frac{1}{4}\nabla_{e_{i}}\Big(\sum_{|\alpha|=2}
(\partial^{\alpha}R^{L})_{x_{0}}(\mR, e_{i})
\frac{Z^{\alpha}}{\alpha!}\Big)
-\frac{1}{9}\sum_{i}\Big[\sum_{j}(\partial_{j}R^{L})_{x_{0}}
(\mR, e_{i})Z_{j}\Big]^{2} \\
 &\qquad-\frac{1}{12}\Big[\cL_{0},
\Big\langle R_{x_{0}}^{TX}(\mR, e_{i})\mR, e_{i}\Big\rangle_{x_{0}}
\Big]+\Phi_{x_{0}}.
\end{split}
\end{equation}
From (\ref{2.1}) and (\ref{2.3}), as in \cite[Remark\,4.1.8]{MM07},
$\cL_{t}$ is a formally self-adjoint elliptic operator with
respect to $\|\boldsymbol\cdot\|_{L^{2}}$ on $\R^{2n}$ and
is a smooth
family of operators with respect to the parameter $x_{0}\in X$.
Thus $\cL, \cL_{0}$ and $\mO_{r}$ in (\ref{2.4}) are formally
self-adjoint with respect to $\|\boldsymbol\cdot\|_{L^{2}}$.

By \cite[Theorem 8.3.8]{MM07}, the following asymptotic expansion
of the generalized Bergman kernel holds.
\begin{thm} \label{t2.1}
There exist polynomials $J_{r}(Z, Z')$ in $Z, Z'$ with
the same parity as $r$ and of degree $\deg J_{r}(Z, Z')\leqslant 3r$,
such that if we define
\begin{align}\label{lus2.7}
\cF_{r}(Z, Z')=J_{r}(Z, Z')\cP(Z, Z'),\ \ J_{0}=1,
\end{align}
then for any $k, \ell,m\in \N$, $q>0$, there exists $C>0$ such that
if $p\geqslant 1$, $Z, Z'\in T_{x_{0}}X$ and
$|Z|, |Z'|\leqslant \frac{q}{\sqrt{p}}$, we have
 \begin{multline}\label{lus2.8}
\sup_{|\alpha|+|\alpha'|\leqslant m} \Big|
\frac{\partial^{|\alpha|+|\alpha'|}} {\partial Z^{\alpha}
{\partial Z'}^{\alpha'}}\Big ( \frac{1}{p^n}P_{\mH_p} (Z,Z')
 - \sum_{r=0}^k \cF_{r} (\sqrt{p}Z,\sqrt{p}Z')
\kappa ^{-\frac{1}{2}}(Z)\kappa^{-\frac{1}{2}}(Z')
p^{-\frac{r}{2}}\Big ) \Big |_{\cC ^{\ell}(X)}
 \leqslant C p^{-\frac{k-m+1}{2}},
\end{multline}
where $\cC ^{\ell}(X)$ is $\cC^\ell$-norm for
the parameter $x_0\in X$.
\end{thm}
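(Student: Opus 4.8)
The plan is to reconstruct the near-diagonal expansion by the analytic localization and rescaling technique underlying \cite[Theorem\,8.3.8]{MM07}. The starting point is the spectral gap \eqref{0.4}: the projection $P_{\mH_p}$ onto the bound states is represented as a contour integral
\[
P_{\mH_{p}}=\frac{1}{2\pi\sqrt{-1}}\oint_{\Gamma}(\lambda-\Delta_{p,\Phi})^{-1}\,d\lambda,
\]
where $\Gamma$ is a fixed circle enclosing $[-C_{L},C_{L}]$ and disjoint from $[4\pi p-C_{L},+\infty)$. Since the wave operator of $\Delta_{p,\Phi}$ has finite propagation speed, $P_{\mH_{p}}(x,y)$ decays faster than any power of $p$ once $d(x,y)\geqslant\varepsilon$, so it suffices to work in the normal coordinates on $B^{T_{x_{0}}X}(0,\varepsilon)$ with the parallel-transport trivialization. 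After the rescaling $S_{t}$ and conjugation by $\kappa^{1/2}$ of \eqref{2.3} with $t=p^{-1/2}$, the operator $\tfrac1p\Delta_{p,\Phi}$ becomes $\cL_{t}$, and the rescaled projection is the spectral projection of $\cL_{t}$ onto a neighborhood of $0$, namely $P_{t}=\frac{1}{2\pi\sqrt{-1}}\oint_{\delta}(\lambda-\cL_{t})^{-1}\,d\lambda$ with $\delta$ a small circle about $0$ lying below the next eigenvalue $4\pi$ of $\cL_{0}=\cL$ (Theorem \ref{lut1.1}).

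Next I would insert the Taylor expansion \eqref{2.4} into the resolvent and expand in powers of $t$ by the perturbation series
\[
(\lambda-\cL_{t})^{-1}=\sum_{j\geqslant 0}\ \sum_{r_{1},\dots,r_{j}\geqslant 1}t^{\,r_{1}+\cdots+r_{j}}\,(\lambda-\cL_{0})^{-1}\mO_{r_{1}}(\lambda-\cL_{0})^{-1}\cdots\mO_{r_{j}}(\lambda-\cL_{0})^{-1},
\]
the term $j=0$ being $(\lambda-\cL_{0})^{-1}$. Integrating term by term over $\delta$ defines the operators whose kernels are the $\cF_{r}$; the leading term $\frac{1}{2\pi\sqrt{-1}}\oint_{\delta}(\lambda-\cL_{0})^{-1}\,d\lambda=\cP$ gives $J_{0}=1$. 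To see that each $\cF_{r}$ has kernel of the form $J_{r}(Z,Z')\cP(Z,Z')$, I would use the spectral decomposition of Theorem \ref{lut1.1}: on $\Ker(\cL_{0})$ the resolvent acts by $\lambda^{-1}$ and the contour integral reproduces $\cP$, while on $\Ker(\cL_{0})^{\perp}$ the resolvent is holomorphic near $0$. Writing the $\mO_{j}$ in terms of $Z$, $b_{k}$ and $b^{+}_{k}$ and using the commutation relations \eqref{1.6a} together with \eqref{1.6} and \eqref{lus1.7a}, the whole composite maps polynomial-times-Gaussian to polynomial-times-Gaussian, so after applying $\cP$ on both ends it reduces to multiplication by a polynomial against $\cP$.

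The two structural assertions are the parity and the degree bound. For parity I would use the reflection $rf(Z)=f(-Z)$: from \eqref{2.5}, \eqref{2.6} one checks that $\cL_{0}$ is $r$-invariant while $r\mO_{r}r=(-1)^{r}\mO_{r}$ (each factor $\mR$, $Z_{j}$, and $\nabla_{0,e_{i}}$ reverses sign, so $\mO_{1}$ is odd, $\mO_{2}$ even, and so on). Conjugating the $r$-th term of the perturbation series by $r$ therefore multiplies it by $(-1)^{r}$; since the kernel of $rAr$ is $A(-Z,-Z')$ and $\cP(-Z,-Z')=\cP(Z,Z')$ by \eqref{lus1.5a}, we get $J_{r}(-Z,-Z')=(-1)^{r}J_{r}(Z,Z')$, i.e.\ $J_{r}$ has the parity of $r$. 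The degree bound $\deg J_{r}\leqslant 3r$ follows from the rescaling weight bookkeeping of \cite[\S4.1.6, \S8.3]{MM07},\,\cite{MM08a}: each $\mO_{r}$ raises the $Z$-degree by at most $r+2$ (for instance $\mO_{1}$ in \eqref{2.5} has $Z$-degree $3$, from the quadratic factor $(\partial_{j}R^{L})(\mR,e_{i})Z_{j}$ times the degree-one part of $\nabla_{0,e_{i}}$), and an iterated product $\mO_{r_{1}}\cdots\mO_{r_{j}}$ with $\sum_{i}r_{i}=r$ and $j\leqslant r$ gives degree at most $\sum_{i}(r_{i}+2)=r+2j\leqslant 3r$, the extremal value being realized by the $j=r$ chain of $\mO_{1}$'s.

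The main obstacle, and the part requiring genuine analysis rather than algebra, is the uniform remainder estimate \eqref{lus2.8}: one must show that truncating the perturbation series at order $k$ leaves an error of size $p^{-(k-m+1)/2}$ in $\cC^{\ell}$, jointly in $(Z,Z')$ and in the base point $x_{0}\in X$. This rests on Sobolev-norm estimates for $(\lambda-\cL_{t})^{-1}$ that are uniform in $t\in(0,1]$, in $\lambda\in\delta$, and in $x_{0}\in X$, which one obtains from the ellipticity and formal self-adjointness of $\cL_{t}$ together with the coercivity of $\cL_{0}$ on $\Ker(\cL_{0})^{\perp}$; passing from these operator bounds to the pointwise kernel bounds with derivatives uses Sobolev embedding and the finite-propagation localization. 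It is this uniform-in-parameter resolvent analysis, not the formal expansion itself, that constitutes the technical heart of the statement.
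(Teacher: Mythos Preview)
The paper does not prove Theorem~\ref{t2.1}; it is quoted verbatim from \cite[Theorem\,8.3.8]{MM07} (see the sentence immediately preceding the statement), and the formulas \eqref{lus2.9} for $\cF_{1},\cF_{2}$ are likewise imported from \cite[(4.1.93), (8.3.45)]{MM07}. Your proposal is an accurate outline of the argument in that reference: localization via finite propagation speed, the rescaling \eqref{2.3} to pass to $\cL_{t}$, the resolvent/contour-integral expansion around the model operator $\cL_{0}=\cL$, the parity argument by conjugation with the reflection, the degree bookkeeping $\deg J_{r}\leqslant 3r$, and the uniform Sobolev estimates for the remainder. So your approach coincides with the one the paper relies on; the paper simply delegates the entire proof to \cite{MM07,MM08a}.
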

\noindent
Moreover, by \cite[(4.1.93), (8.3.45)]{MM07}, $\cF_{1}$
and $\cF_{2}$ are given by
(cf. \cite[(8.3.65)]{MM07}, \cite[(1.111)]{MM08a})
\begin{equation}\label{lus2.9}\begin{split}
\cF_{1}&=-\cP^{\bot}\cL^{-1}\mO_{1}\cP
-\cP\mO_{1}\cL^{-1}\cP^{\bot},\\
\cF_{2}&=\cL^{-1}\cP^{\bot}\mO_{1}\cL^{-1}\cP^{\bot}\mO_{1}
\cP-\cL^{-1}\cP^{\bot}\mO_{2}\cP \\
&\quad +\cP\mO_{1}\cL^{-1}\cP^{\bot}\mO_{1}\cL^{-1}\cP^{\bot}
-\cP\mO_{2}\cL^{-1}\cP^{\bot}\\
&\quad +\cP^{\bot}\cL^{-1}\mO_{1}\cP\mO_{1}\cL^{-1}\cP^{\bot}
-\cP\mO_{1}\cL^{-2}\cP^{\bot}\mO_{1}\cP.
\end{split}\end{equation}
From Theorem \ref{t2.1}, we get in particular
\cite[Theorem\,8.3.3]{MM07}: there exist
$\bb_{r}\in \cC^{\infty}(X, \R)$ such that
for any $k, \ell\in \N$, there exists $C_{k, \ell}>0$ such that
\begin{align}\label{2.11}
\Big|\frac{1}{p^{n}}P_{\mH_{p}}(x, x)
-\sum^{k}_{r=0}\bb_{r}(x)p^{-r}\Big|_{\cC^{\ell}}
\leqslant C_{k, \ell}\ p^{-k-1},
\end{align}
and
\begin{align}\label{2.12}
\bb_{0}(x_{0})=\cF_{0}(0, 0)=1,\quad
\bb_{r}(x_{0})=\cF_{2r}(0, 0),\quad \cF_{2r+1}(0, 0)=0.
\end{align}

\section{Proof of Theorem \ref{t0.1}}\label{s3}
In this section we reduce Theorem \ref{t0.1} to Theorem \ref{t3.1}.
Let us fix $x_{0}\in X$. As in section \ref{s2},
we identify a small geodesic ball
$B^{X}(x_{0}, \varepsilon)$ to $B^{T_{x_{0}}X}$ by means
of the exponential map and we trivialize $L$ by using a unit frame
$e_{L}(Z)$ which is parallel with
respect to $\nabla^{L}$ along the curve
$[0, 1]\ni u\rightarrow uZ$ for
$Z\in B^{T_{x_{0}}X}(0, \varepsilon)$.

Set $d_{p}:=\dim\mH_{p}$ and for
$v=(v_{1}, \ldots, v_{d_{p}})\in \C^{d_{p}}$,
set $\|v\|^{2}=\sum^{d_{p}}_{j=1}|v_{j}|^{2}$.
We can now express the Fubini-Study form in the
homogeneous coordinate
$[v]=[v_{1}, \ldots, v_{d_{p}}]\in
\mathbb{P}(\mH^{\ast}_{p})$ as
\begin{align}\label{3.1}
\frac{\sqrt{-1}}{2\pi}\partial\ov{\p}
\log \big(\|v\|^{2}\big)=\frac{\sqrt{-1}}{2\pi}
\Biggl[\frac{1}{\|v\|^{2}}\sum^{d_{p}}_{j=1}
dv_{j}\wedge d\ov{v}_{j}
-\frac{1}{\|v\|^{4}}\sum^{d_{p}}_{j, k=1}
\ov{v}_{j}v_{k}dv_{j}\wedge d\ov{v}_{k}\Biggr].
\end{align}

Let $\{s_{j}\}$ be an orthonormal basis of $\mH_{p}$,
and let $\{s^{j}\}$ be its dual basis. We write locally
$s_{j}=f_{j}e^{\otimes p}_{L}$, then by (\ref{0.5}),
as in \cite[(5.1.17)]{MM07}, we have
\begin{align}\label{luw3.2}
\Phi_{p}(x)=\Biggl[\sum^{d_{p}}_{j=1}f_{j}(x)s^{j}\Biggr]\in
\mathbb{P}(\mH^{\ast}_{p}).
\end{align}
Set
\begin{align}\label{3.4}
f^{p}(x, y)=\sum_{i=1}^{d_{p}}f_{i}(x)\overline{f_{i}}(y)\ \
\textup{and}\ \ \big|f^{p}(x)\big|^{2}=f^{p}(x, x).
\end{align}
Then
\begin{align}\label{3.5}
P_{\mH_{p}}(x, y)=f^{p}(x, y)e^{\otimes p}_{L}(x)
\otimes e^{\otimes p}_{L}(y)^{\ast}, \quad \big|f^{p}(x)\big|^{2}
=P_{\mH_{p}}(x, x).
\end{align}
By (\ref{3.1}), (\ref{luw3.2}) and (\ref{3.4}), we get
\begin{equation}\label{lus3.2}
\begin{split}
\Phi_{p}^{\ast}(\omega_{FS})(x_{0})
&= \frac{\sqrt{-1}}{2\pi}\Biggl[\frac{1}{|f^{p}|^{2}}
\sum^{d_{p}}_{j=1}df_{j}\wedge d\ov{f_{j}}
-\frac{1}{|f^{p}|^{4}}\sum_{j, k=1}^{d_{p}}
\ov{f_{j}}f_{k}df_{j}\wedge d\ov{f_{k}}\Biggr](x_{0})\\
&=\frac{\sqrt{-1}}{2\pi}\Big[\big|f^{p}(x_{0})\big|^{-2}
d_{x}d_{y}f^{p}(x, y)
-\big|f^{p}(x_{0})\big|^{-4}d_{x}f^{p}(x, y) \wedge
d_{y}f^{p}(x, y)\Big]\Big|_{x\,=\,y\,=\,x_{0}}\:,
\end{split}
\end{equation}
where $\big|_{x\,=\,y\,=\,x_{0}}$ means the pull-back
by the diagonal map $\jmath: X\rightarrow X\times X$,
$x_{0}\mapsto (x_{0}, x_{0})$.

By (\ref{3.5}), $P_{\mH_{p}}(x, y)$ is represented by $f^{p}(x, y)$ under
our trivialization of $L$.
Since we work with normal coordinates, we get from \eqref{2.2}
(cf.\ \cite[(4.1.101)]{MM07})
\begin{equation}\label{bk2.82}
\begin{split}
&\kappa(Z)
= 1 + \cO (|Z|^2).
\end{split}
\end{equation}
By (\ref{lus2.8}), (\ref{2.12}), (\ref{lus3.2}) and (\ref{bk2.82}),
we get
\begin{equation}\label{lus3.6}
\begin{split}
\frac{1}{p}\Phi_{p}^{\ast}(\omega_{FS})(x_{0})
&=\frac{\sqrt{-1}}{2\pi}\bigg\{\Big[\frac{1}{\cF_{0}}
d_{x}d_{y}\cF_{0}-\frac{1}{\cF_{0}^{2}}d_{x}\cF_{0}
\wedge d_{y}\cF_{0}\Big](0, 0)\\
&\quad+p^{-1/2}\Big[\frac{1}{\cF_{0}}d_{x}d_{y}\cF_{1}
-\frac{1}{\cF^{2}_{0}}\big(d_{x}\cF_{1}\wedge d_{y}\cF_{0}
+d_{x}\cF_{0}\wedge d_{y}\cF_{1}\big)\Big](0, 0)\\
&\quad+p^{-1}\Big[\frac{1}{\cF_{0}}d_{x}d_{y}\cF_{2}
-\frac{\cF_{2}}{\cF^{2}_{0}} d_{x}d_{y}\cF_{0}
+\frac{2\cF_{2}}{\cF^{3}_{0}} d_{x}\cF_{0}\wedge d_{y}\cF_{0} \\
&\qquad\qquad
-\frac{1}{\cF^{2}_{0}}\big(d_{x}\cF_{0}\wedge d_{y}\cF_{2}
+d_{x}\cF_{1}\wedge d_{y}\cF_{1}
+d_{x}\cF_{2}\wedge d_{y}\cF_{0}\big)\Big](0, 0)\\
&\quad+p^{-3/2}\Big[\frac{1}{\cF_{0}}d_{x}d_{y}\cF_{3}
-\frac{\cF_{2}}{\cF_{0}^{2}} d_{x}d_{y}\cF_{1}
-\frac{1}{\cF^{2}_{0}}\big(d_{x}\cF_{0}\wedge d_{y}\cF_{3} \\
&\qquad\qquad
+d_{x}\cF_{1}\wedge d_{y}\cF_{2}
+d_{x}\cF_{2}\wedge d_{y}\cF_{1}
+d_{x}\cF_{3}\wedge d_{y}\cF_{0}\big) \\
 &\qquad\qquad
+\frac{2\cF_{2}}{\cF_{0}^{3}}
\big(d_{x}\cF_{0}\wedge d_{y}\cF_{1}
+d_{x}\cF_{1}\wedge d_{y}\cF_{0}\big)\Big](0, 0)\bigg\}
+\cO(p^{-2}).
\end{split}
\end{equation}
From (\ref{lus1.5a}) and (\ref{lus2.7}), we obtain
\begin{align}\label{lus3.7}
d_{x}\cF_{0}(0, 0)=d_{y}\cF_{0}(0, 0)=0.
\end{align}
As $J_{r}$ is a polynomial in $Z, Z'$ with the same parity as $r$,
we know from \eqref{lus1.5a} and \eqref{lus2.7}
that for $\alpha, \alpha'\in \N^{2n}$, there exists a polynomial
$J_{r, \alpha, \alpha'}$ in $Z, Z'$ with the same parity
as $r-|\alpha|-|\alpha'|$ such that
\begin{align}\label{luw3.8}
\frac{\p^{|\alpha|+|\alpha'|}}
{\p Z^{\alpha}\partial Z'^{\alpha'}}\cF_{r}(Z, Z')
=\big(J_{r, \alpha, \alpha'}\cP\big)(Z, Z').
\end{align}
In particular, \eqref{luw3.8} yields
\begin{align}\label{lus3.8}
d_{x}d_{y}\cF_{1}(0, 0)=d_{x}d_{y}\cF_{3}(0, 0)=0,
\quad d_{y}\cF_{2}(0, 0)=d_{x}\cF_{2}(0, 0)=0.
\end{align}
By (\ref{lus1.5a}) and (\ref{lus2.7}), we get
\begin{align}\label{3.9}
\frac{\sqrt{-1}}{2\pi}(d_{x}d_{y}\cF_{0})(0, 0)
=\frac{\sqrt{-1}}{2\pi}(d_{x}d_{y}\cP)(0, 0)
=\frac{\sqrt{-1}}{2}\sum^{n}_{j=1}dz_{j}\wedge d\ov{z}_{j}
=\omega(x_{0}).
\end{align}
Substituting (\ref{2.12}), (\ref{lus3.7}), (\ref{lus3.8})
and (\ref{3.9}) into (\ref{lus3.6}) yields
 \begin{equation}\label{3.10}
\begin{split}
 \frac{1}{p}\Phi_{p}^{\ast}(\omega_{FS})(x_{0})
 =\omega(x_{0})
 &+ \frac{\sqrt{-1}}{2\pi}\big(d_{x}d_{y}\cF_{2}
-d_{x}\cF_{1}\wedge d_{y}\cF_{1}\big)(0, 0) \, p^{-1}\\
&-\bb_{1}(x_{0})\omega(x_{0})\, p^{-1}
+\cO(p^{-2}).
\end{split}
 \end{equation}
Recall that for a tensor $\psi$, $\nabla^{X}\psi$ is the
covariant derivative of $\psi$ induced by the Levi-Civita connection
$\nabla^{TX}$. We will denote by
$\left\langle  \,\cdot ,\,\cdot\right\rangle$ the $\C$-bilinear form on
$TX\otimes_{\R} \C$ induced by $g^{TX}$.

The following observation \cite[(8.3.54)]{MM07} is very useful.
\begin{lemma}\label{4.9}
For $U\in T_{x_{0}}X$,
$\nabla_{U}^{X}J$ is skew-adjoint and the tensor
$\big\langle (\nabla_{\boldsymbol\cdot}^{X}J)\boldsymbol\cdot,
\boldsymbol\cdot \big\rangle$
is of type $\big(T^{\ast(1, 0)}X\big)^{\otimes 3}\oplus
\big(T^{\ast(0, 1)}X\big)^{\otimes 3}$.
\end{lemma}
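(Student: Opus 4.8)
The plan is to derive everything from the structural properties of the Levi-Civita connection: it is metric ($\nabla^{TX}g^{TX}=0$) and torsion-free, while the almost-complex data satisfy $J^{2}=-\Id$, the $J$-invariance $g^{TX}(J\cdot,J\cdot)=g^{TX}(\cdot,\cdot)$, and $d\om=0$ (the form $\om=\frac{\sqrt{-1}}{2\pi}R^{L}$ being closed by the Bianchi identity).

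First I would establish two algebraic relations by differentiating pointwise identities. Differentiating $J^{2}=-\Id$ along $U$ gives $(\nabla^{X}_{U}J)J+J(\nabla^{X}_{U}J)=0$, so $\nabla^{X}_{U}J$ anticommutes with $J$. The $J$-invariance of $g^{TX}$ is equivalent to $J$ being skew-adjoint, $g^{TX}(J\cdot,\cdot)=-g^{TX}(\cdot,J\cdot)$; differentiating this relation along $U$ and using $\nabla^{TX}g^{TX}=0$ (for instance by evaluating on vectors parallel at $x_{0}$) yields $g^{TX}((\nabla^{X}_{U}J)V,W)=-g^{TX}(V,(\nabla^{X}_{U}J)W)$, which is the asserted skew-adjointness of $\nabla^{X}_{U}J$.

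For the type statement, set $T(U,V,W)=\langle(\nabla^{X}_{U}J)V,W\rangle$. The anticommutation relation shows that $\nabla^{X}_{U}J$ maps $T^{(1, 0)}X$ into $T^{(0, 1)}X$ and conversely (an eigenvector of $J$ for $\sqrt{-1}$ is sent to one for $-\sqrt{-1}$). Moreover the $J$-invariance forces $T^{(1, 0)}X$ and $T^{(0, 1)}X$ to be isotropic for the bilinear form $\langle\cdot,\cdot\rangle$: for $V,W\in T^{(1, 0)}X$ one has $\langle V,W\rangle=\langle JV,JW\rangle=-\langle V,W\rangle$. Combining these two observations, $T(U,V,W)$ can be nonzero only when $V$ and $W$ are of the same type. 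To control the remaining argument $U$, I would use that $T(U,V,W)=(\nabla^{X}_{U}\om)(V,W)$, which follows from $\om(V,W)=g^{TX}(JV,W)$ together with $\nabla^{TX}g^{TX}=0$; since $\nabla^{TX}$ is torsion-free and $d\om=0$, the cyclic identity $(\nabla^{X}_{U}\om)(V,W)+(\nabla^{X}_{V}\om)(W,U)+(\nabla^{X}_{W}\om)(U,V)=0$ holds, i.e.\ $T(U,V,W)+T(V,W,U)+T(W,U,V)=0$. Feeding in $V,W\in T^{(1, 0)}X$ and $U\in T^{(0, 1)}X$, the last two terms vanish (their second and third arguments then carry opposite types), forcing $T(U,V,W)=0$; the conjugate choice handles the $(0, 1)$ case. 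Hence $T$ survives only when all three arguments share the same type, which is precisely the claimed decomposition $\big(T^{\ast(1, 0)}X\big)^{\otimes 3}\oplus\big(T^{\ast(0, 1)}X\big)^{\otimes 3}$.

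The skew-adjointness and the vanishing for mixed $(V,W)$ are routine once $\nabla^{TX}g^{TX}=0$ and the two algebraic identities are in hand. The genuine point, and the step I expect to be the crux, is the control of the first argument $U$: it cannot follow from skew-adjointness alone and requires the closedness of $\om$ together with the torsion-freeness of $\nabla^{TX}$, entering through the cyclic $d\om$ identity.
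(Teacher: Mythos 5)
Your proof is correct and follows essentially the same route as the source: the paper does not prove Lemma \ref{4.9} itself but cites it from \cite[(8.3.54)]{MM07}, and the ingredients you use --- the anticommutation $(\nabla^{X}_{U}J)J=-J(\nabla^{X}_{U}J)$, skew-adjointness via $\nabla^{TX}g^{TX}=0$, isotropy of $T^{(1,0)}X$ and $T^{(0,1)}X$ for the $\C$-bilinear form, and the cyclic identity coming from $d\om=0$ and torsion-freeness --- are exactly those of that reference. In particular, the cyclic identity you correctly identify as the crux is recorded in the paper itself as \eqref{luw4.40}, cited from \cite[(8.3.48)]{MM07}.
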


 \begin{lemma}\label{t3.1a}
 We have
 \begin{align}\label{luw3.10}
 (d_{x}\cF_{1})(0, 0)=(d_{y}\cF_{1})(0, 0)=0.
 \end{align}
 \end{lemma}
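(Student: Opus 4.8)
The plan is to reduce the statement to a vanishing property of the polynomial $J_1$ and then read it off from the explicit formula \eqref{lus2.9}. Writing $\cF_1=J_1\cP$ with $J_1$ odd, we have $J_1(0,0)=0$ while $\cP(0,0)=1$, so for each coordinate $\partial_{Z_i}\cF_1(0,0)=(\partial_{Z_i}J_1)(0,0)$; thus $d_x\cF_1(0,0)$ is precisely the part of $J_1$ that is linear in $Z$ and constant in $Z'$, and it suffices to show that this part vanishes. Moreover, since $\mO_1,\cL,\cP$ are formally self-adjoint, the two summands in \eqref{lus2.9} are mutually adjoint, so $\cF_1^{*}=\cF_1$, i.e. $J_1(Z,Z')=\ov{J_1(Z',Z)}$; hence the vanishing of $d_y\cF_1(0,0)$ will follow from that of $d_x\cF_1(0,0)$ by conjugation, and I only need to treat $d_x$.

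First I would simplify $\mO_1$. Since $\frac{\sqrt{-1}}{2\pi}R^{L}=\omega$ and $\omega(U,V)=\langle JU,V\rangle$ with $\nabla^{TX}g^{TX}=0$, in normal coordinates one has $(\partial_i R^L)_{x_0}(U,V)=-2\pi\sqrt{-1}\,\langle(\nabla^X_{e_i}J)U,V\rangle$. The scalar term $-\tfrac13(\partial_i R^L)(\mR,e_i)$ of \eqref{2.5} then involves $\sum_i\langle(\nabla^X_{e_i}J)\mR,e_i\rangle$, which is the trace of the endomorphism $\cdot\mapsto(\nabla^X_{\cdot}J)\mR$; by Lemma \ref{4.9} this tensor is of type $(3,0)+(0,3)$, and tracing over a pair consisting of one $(1,0)$- and one $(0,1)$-vector annihilates it, so the scalar term of $\mO_1$ vanishes. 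Applying the surviving first-order part to $\cP$ and using $b^{+}_{j}\cP=0$, $b_{j}\cP=2\pi(\ov z_j-\ov z'_j)\cP$ from \eqref{1.6}, together with the type decomposition of $(\nabla^X_{\mR}J)\mR$ given by Lemma \ref{4.9}, I expect $\mO_1\cP=c\sum_k\ov{G_k(z)}(\ov z_k-\ov z'_k)\cP$ for a constant $c$ and holomorphic quadratics $G_k$; the decisive point is that this is a polynomial in $\ov z,\ov z'$ only (antiholomorphic), with its $\ov z$-dependence concentrated in degree three.

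It then remains to apply $\cL^{-1}\cP^{\bot}$. Here I use Theorem \ref{lut1.1}: $b^{\alpha}\cP$ is an eigenfunction of $\cL$ with eigenvalue $4\pi|\alpha|$, and $b^{\alpha}\cP=(2\pi)^{|\alpha|}(\ov z-\ov z')^{\alpha}\cP$. Expanding $\ov z_l\ov z_m(\ov z_k-\ov z'_k)\cP$ in this eigenbasis by repeatedly substituting $\ov z_l\cP=\tfrac1{2\pi}b_l\cP+\ov z'_l\cP$, every resulting term carries at least one factor $b$ (so eigenvalue $\geqslant 4\pi$); hence $\cP^{\bot}$ acts as the identity and $\cL^{-1}$ merely divides by the eigenvalue, yielding $\cP^{\bot}\cL^{-1}\mO_1\cP=J_A\cP$ with $J_A$ again antiholomorphic. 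Setting $Z'=0$, every summand other than the pure $(\ov z)^{\otimes 3}$ one carries a factor $\ov z'$ and drops out, so $J_A(\cdot,0)$ is homogeneous cubic in $\ov z$ and has no linear term. The second summand of $\cF_1$ is the adjoint of the first, hence equals $\ov{J_A(Z',Z)}\,\cP$, which at $Z'=0$ is cubic in $z$; thus neither summand contributes a term linear in $Z$ and constant in $Z'$, giving $d_x\cF_1(0,0)=0$ and, by the symmetry above, $d_y\cF_1(0,0)=0$.

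The main obstacle is the bookkeeping in this last step: one has to organize the eigenspace expansion carefully enough to be certain that no linear-in-$Z$ term is generated. The two structural inputs that make this clean are the vanishing of the scalar part of $\mO_1$ (Lemma \ref{4.9}) — without it, a stray $\ov z$-linear term would survive at $Z'=0$ — and the antiholomorphicity of $\mO_1\cP$, which forces $J_A(\cdot,0)$ to be purely cubic.
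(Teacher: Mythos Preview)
Your proof is correct and follows essentially the same route as the paper: both use Lemma~\ref{4.9} to see that $\mO_1\cP$ is a purely antiholomorphic cubic times $\cP$, pass through $\cL^{-1}\cP^{\bot}$ via the $b^{\alpha}$-eigenbasis, invoke self-adjointness for the second summand, and conclude that $J_1$ is homogeneous of degree~$3$, so its first derivatives at the origin vanish. The only difference is cosmetic: the paper quotes the rewriting \eqref{lu0.10} and computes the kernel explicitly as \eqref{lu0.7}--\eqref{lu0.9} (where in fact every monomial carries a $\ov z'$ or $z'$ factor, so the vanishing at $Z'=0$ is even more immediate), whereas you argue structurally via the vanishing of the scalar term of $\mO_1$ and antiholomorphicity.
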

 \begin{proof}
 By \eqref{lus1.1a} and \eqref{2.5}, we have
 (cf. \cite[(8.3.51)]{MM07})
 \begin{align}\label{lu0.10}
  \mO_{1}=
  -\frac{2}{3}\Big[\Big\langle \big(\nabla_{\mR}^{X}\mJ\big)\mR,
  \frac{\p}{\p z_{i}} \Big\rangle b^{+}_{i}-
b_{i}\Big\langle \big(\nabla_{\mR}^{X}\mJ\big)\mR,
\frac{\p}{\p \ov{z}_{i}} \Big\rangle \Big],\ \
\mJ=-2\pi\sqrt{-1}J.
\end{align}
From Theorem \ref{lut1.1}, \eqref{1.6}, \eqref{lu0.10}
and Lemma \ref{4.9}, we get (cf. \cite[(8.3.67)]{MM07})
\begin{equation}\label{lu0.7}
\begin{split}
\Big(\cL^{-1}&\cP^{\bot}\mO_{1}\cP\Big)(Z, Z') \\
&=  -\frac{\sqrt{-1}}{3}
\bigg\{\Big(\frac{b_{i}b_{j}}{4\pi}
\Big\langle \big(\nabla_{\frac{\p}{\p\ov{z}_{j}}}
^{X}J\big)\ov{z}',
\frac{\p}{\p \ov{z}_{i}}\Big\rangle
+b_{i}\Big\langle \big(\nabla_{\ov{z}'}^{X}J\big)\ov{z}',
\frac{\p}{\p \ov{z}_{i}}\Big\rangle  \Big)
\cP\bigg\}(Z, Z') \\
&= -\frac{\sqrt{-1}\pi}{3}
\bigg[\Big\langle \big(\nabla_{\ov{z}}^{X}J\big)\ov{z}',
\ov{z}\Big\rangle
+\Big\langle \big(\nabla_{\ov{z}'}^{X}J\big)\ov{z}',
\ov{z}\Big\rangle  \bigg]\cP(Z, Z').
\end{split}
\end{equation}
Note that if $K$ is an operator on
$\big(\R^{2n}, \|\boldsymbol\cdot\|_{L^{2}}\big)$
with smooth kernel
$K(Z, Z')$ with respect to $dZ'$, then the kernel $K^{\ast}(Z, Z')$
of the adjoint $K^{\ast}$ of $K$, with respect to $dZ'$, is given by
\begin{align}\label{luw3.14}
K^{\ast}(Z, Z')=\ov{K(Z', Z)}.
\end{align}
As $\cL$, $\mO_{1}$ are formally self-adjoint with respect to
$\|\boldsymbol\cdot\|_{L^{2}}$, thus
$\cP\mO_{1}\cL^{-1}\cP^{\bot}$ is the adjoint of
$\cL^{-1}\cP^{\bot}\mO_{1}\cP$.
From Lemma \ref{4.9}, (\ref{lu0.7}) and (\ref{luw3.14}), we get
\begin{equation}\label{lu0.9}
\begin{split}
\Big(\cP\mO_{1}\cL^{-1}\cP^{\bot}\Big)(Z, Z')
&= \frac{\sqrt{-1}\pi}{3}
\bigg[\Big\langle \big(\nabla_{z'}^{X}J\big)z, z'\Big\rangle
+\Big\langle \big(\nabla_{z}^{X}J\big)z, z'\Big\rangle  \bigg]
\cP(Z, Z')\\
&= \frac{\sqrt{-1}\pi}{3}
\Big\langle \big(\nabla_{\frac{\p}{\p z_{j}}}^{X}J
\big)\frac{\p}{\p z_{k}},
\frac{\p}{\p z_{l}}\Big\rangle \big(z_{j}'z_{k}z_{l}'
+z_{j}z_{k}z'_{l} \big)
\cP(Z, Z').
\end{split}
\end{equation}
As the coefficients of $\cP(Z, Z')$ in (\ref{lu0.7}) and (\ref{lu0.9})
are polynomials of degree $3$,
from (\ref{lus2.9}), (\ref{lu0.7}) and (\ref{lu0.9}),
we get (\ref{luw3.10}).
The proof of Lemma \ref{t3.1a} is completed.
 \end{proof}

 \begin{thm}\label{t3.1}
 The following identity holds,
 \begin{align}\label{lu1.1}
 \frac{\sqrt{-1}}{2\pi}(d_{x}d_{y}\cF_{2})(0, 0)
 =\bb_{1}(x_{0})\omega(x_{0}).
 \end{align}
 \end{thm}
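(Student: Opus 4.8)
The plan is to reduce \eqref{lu1.1} to a statement about the bilinear (one factor in $Z$, one factor in $Z'$) part of the polynomial $J_2$, and then to extract that part from the operator formula \eqref{lus2.9}. First I would observe that by \eqref{lus1.5a} every first-order derivative of $\cP$ vanishes at $(0,0)$, so the Leibniz rule applied to $\cF_2=J_2\cP$ kills the two cross terms and leaves
\begin{align}\label{planeq1}
\partial_{Z_a}\partial_{Z'_b}\cF_2(0,0) = (\partial_{Z_a}\partial_{Z'_b}J_2)(0,0) + \bb_1(x_0)\,(\partial_{Z_a}\partial_{Z'_b}\cP)(0,0),
\end{align}
where I used $J_2(0,0)=\cF_2(0,0)=\bb_1(x_0)$ from \eqref{2.12}. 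By \eqref{3.9} the second term produces exactly $\bb_1(x_0)\omega(x_0)$ after multiplication by $\frac{\sqrt{-1}}{2\pi}$ and wedging. Thus Theorem \ref{t3.1} is equivalent to the vanishing of the $2$-form built from the bilinear part of $J_2$.

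Second, I would expand this bilinear part in complex coordinates as $\alpha_{jk}z_jz'_k+\beta_{jk}z_j\ov{z}'_k+\gamma_{jk}\ov{z}_jz'_k+\delta_{jk}\ov{z}_j\ov{z}'_k$. Pulling $dZ'$ back to the diagonal and antisymmetrizing, the associated $2$-form is
\begin{align}\label{planeq2}
\sum_{j,k}\alpha_{jk}\,dz_j\wedge dz_k + \sum_{j,k}\delta_{jk}\,d\ov{z}_j\wedge d\ov{z}_k + \sum_{j,k}(\beta_{jk}-\gamma_{kj})\,dz_j\wedge d\ov{z}_k,
\end{align}
so the claim becomes: $\alpha$ and $\delta$ are symmetric and $\beta_{jk}=\gamma_{kj}$. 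Since $P_{\mH_p}$ is the kernel of the self-adjoint Bergman projection, one has $\cF_2(Z,Z')=\ov{\cF_2(Z',Z)}$, and as $\cP(Z,Z')=\ov{\cP(Z',Z)}$ this forces $J_2(Z,Z')=\ov{J_2(Z',Z)}$. This Hermitian symmetry already yields $\delta_{jk}=\ov{\alpha_{kj}}$ and makes $\beta,\gamma$ Hermitian, so it remains only to prove that $\alpha$ is symmetric and $\beta_{jk}=\gamma_{kj}$.

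Third, to compute $\alpha,\beta,\gamma,\delta$ I would determine the bilinear part of each of the six operator products in \eqref{lus2.9}. For the four terms quadratic in $\mO_1$ I would start from the closed kernels \eqref{lu0.7} and \eqref{lu0.9}, which are a purely antiholomorphic and a purely holomorphic cubic respectively, and apply the remaining $\mO_1,\cL^{-1},\cP^{\bot}$ factors using the commutation relations \eqref{1.6a}, the projection rule \eqref{lus1.7a}, and the identities $b^{+}_j\cP=0$, $b_j\cP=2\pi(\ov{z}_j-\ov{z}'_j)\cP$ of \eqref{1.6}. For the two terms $-\cL^{-1}\cP^{\bot}\mO_2\cP$ and $-\cP\mO_2\cL^{-1}\cP^{\bot}$ I would feed the explicit expression \eqref{2.6} for $\mO_2$, which carries the curvatures $R^{TX}$ and $R^L$ and the potential $\Phi$, into the same calculus. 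Throughout, Lemma \ref{4.9} is used to discard all mixed-type contractions of $\nabla^X J$, retaining only the $(3,0)+(0,3)$ pieces.

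Finally, the main obstacle is the bookkeeping of the $\mO_2$-terms together with the quadratic-in-$\mO_1$ terms: a priori each contributes bilinear pieces of all four types, mixing $\langle R^{TX}\cdot,\cdot\rangle$ with the symmetrized covariant derivatives of $R^L$. The whole force of the theorem lies in the cancellation whereby, after summing the six terms and forming \eqref{planeq2}, the holomorphic coefficient $\alpha$ emerges symmetric and the two mixed coefficients satisfy $\beta_{jk}=\gamma_{kj}$, so that only the scalar multiple $\bb_1(x_0)\omega(x_0)$ survives. I expect these cancellations to rely on the first Bianchi identity for $R^{TX}$ in tandem with Lemma \ref{4.9} — precisely the local Kähler-like behaviour highlighted in the introduction — and checking them term by term is the crux of the proof.
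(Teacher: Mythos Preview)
Your plan is the same as the paper's: Section~\ref{s4} decomposes $\cF_2$ into the six operators $I_1,\ldots,I_6$ of \eqref{4.4}, computes each $(d_xd_yI_j)(0,0)$ via the calculus of \eqref{1.6}--\eqref{lus1.7a}, and sums. Your Leibniz reduction \eqref{planeq1} is a correct reframing---it isolates the target $\bb_1(x_0)\omega(x_0)$ as the constant term of $J_2$ times $(d_xd_y\cP)(0,0)$---but it does not shorten anything, since you must still extract the bilinear part of $J_2$ from all six operator products, which is the entire computation.

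Two corrections. First, the Hermitian symmetry $J_2(Z,Z')=\overline{J_2(Z',Z)}$ yields $\beta_{jk}=\overline{\gamma_{kj}}$ and $\delta_{jk}=\overline{\alpha_{kj}}$; it does \emph{not} make $\beta$ or $\gamma$ individually Hermitian. Your reduction to ``$\alpha$ symmetric and $\beta_{jk}=\gamma_{kj}$'' still stands, but the intermediate sentence is wrong. Second, and more seriously, the tools you list are not sufficient. The first Bianchi identity and Lemma~\ref{4.9} handle the $\mO_1$-quadratic terms $I_1,I_3,I_5,I_6$, but the $\mO_2$-terms $I_2,I_4$ bring in curvature pieces such as $\big\langle R^{TX}(\tfrac{\partial}{\partial z_j},\tfrac{\partial}{\partial z_r})\tfrac{\partial}{\partial\bar z_j},\tfrac{\partial}{\partial\bar z_q}\big\rangle$ and second covariant derivatives $(\nabla^X\nabla^X J)$. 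The paper's cancellation hinges on the identities \eqref{luw4.55} and \eqref{bsy3.68}, which express $(\nabla^X\nabla^X J)_{(u_1,\bar v_1)}$ and $[R^{TX},J]$ in terms of $(\nabla^X J)^{\otimes 2}$, and on the derived relation \eqref{lu0.78b} converting the above curvature component into $\tfrac12\cJ_{jri}\cJ_{\bar j\bar q\bar i}$. Without these you cannot merge the $R^{TX}$-contributions from $I_{21},I_{22},I_{23},I_{25}$ with the $(\nabla^X J)^2$-contributions from $I_{24},I_{26}$ and the four $\mO_1$-quadratic terms. So your proposal is a sound outline of the right computation, but the substantive step---several pages of kernel calculus culminating in \eqref{4.86}---is left undone, and the list of algebraic inputs you anticipate is incomplete.
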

Lemma \ref{t3.1a}, Theorem \ref{t3.1} and \eqref{3.10} yield
Theorem \ref{t0.1}.

\section{Proof of Theorem \ref{t3.1}}\label{s4}
This section is devoted to the proof of Theorem \ref{t3.1}.
We will compute the contribution of each term in
(\ref{lus2.9}) to $\cF_{2}$.
Set
\begin{align}\label{4.4}
I_{1}=&
\cL^{-1}\cP^{\bot}\mO_{1}\cL^{-1}\cP^{\bot}
\mO_{1}\cP, \quad I_{2}=-\cL^{-1}\cP^{\bot}\mO_{2}\cP,
\nonumber \\
I_{3}=&
\cP \mO_{1}\cL^{-1}\cP^{\bot}\mO_{1}\cL^{-1}\cP^{\bot},
\quad I_{4}=-\cP\mO_{2}\cL^{-1}\cP^{\bot},
 \\
I_{5}=&
\cP^{\bot}\cL^{-1}\mO_{1}\cP\mO_{1}\cL^{-1}\cP^{\bot},
 \quad I_{6}=-\cP\mO_{1}\cL^{-2}\cP^{\bot}\mO_{1}\cP.
\nonumber
\end{align}
For $j\in \{1, \ldots, 6\}$, let $I_{j}(Z, Z')$
be the smooth kernel of the operator $I_{j}$ with respect to $dZ'$.
By \eqref{lus2.9},
\begin{align}\label{lu0.5}
(d_{x}d_{y}\cF_{2})(0, 0)=\sum^{6}_{j=1}(d_{x}d_{y}I_{j})(0, 0).
\end{align}
In the context of (\ref{luw3.14}), by denoting
$b_{jk}=\dfrac{\p^{2}K}{\p Z_{j}\partial Z'_{k}}(Z, Z')
\Big|_{Z=Z'=0}$ we have
\begin{equation}\label{luw2.1}
\begin{split}
\big(d_{Z}d_{Z'}K^{\ast}\big)(0, 0)&=
\sum_{j, k}dZ_{j}\wedge dZ_{k}\ \frac{\p^{2}K^{\ast}}
{\p Z_{j}\partial Z'_{k}}(Z, Z')\big|_{Z=Z'=0} \\
&= \sum_{j<k}\big(\ov{b}_{kj}
-\ov{b}_{jk}\big)dZ_{j}\wedge dZ_{k}
 = -\ov{\big(d_{Z}d_{Z'}K\big)(0, 0)}.
\end{split}
\end{equation}
Since the operators $\mO_{r}$ from \eqref{2.4} are formally self-adjoint
with respect to $\|\boldsymbol\cdot\|_{L^{2}}$, \eqref{4.4} implies
that
$I_{1}$ and $I_{2}$ are the adjoints of $I_{3}$ and $I_{4}$, respectively,
as operators acting on
$\big(\R^{2n}, \|\boldsymbol\cdot\|_{L^{2}}\big)$.
Hence by \eqref{luw2.1},
\begin{align}\label{luw2.2}
(d_{x}d_{y}I_{3})(0, 0)=-\ov{(d_{x}d_{y}I_{1})(0, 0)},
\quad  (d_{x}d_{y}I_{4})(0, 0)=-\ov{(d_{x}d_{y}I_{2})(0, 0)}.
\end{align}

\subsection{Evaluation of $(d_{x}d_{y}I_{j})(0, 0)$
for $j=1, 3, 5, 6$}\label{luws4.1}

To simplify the notation, for polynomials $Q_{1}, Q_{2}$ in $Z, Z'$,
we will denote
\begin{align}\label{luw4.10}
(Q_{1}\cP)(Z, Z')\sim (Q_{2}\cP)(Z, Z'),
\end{align} if the constant coefficient and the coefficient of
$Z'_{j}$ for all $j$ in $Q_{1}-Q_{2}$
as a polynomial in $Z$ are zero; we denote
\begin{align}\label{luw4.11}
(Q_{1}\cP)(Z, Z')\approx (Q_{2}\cP)(Z, Z'),
\end{align}
if the constant coefficient
and the coefficients
of $Z_{j}, Z'_{k}, Z_{j}Z'_{k}$ for all $j, k$ in $Q_{1}-Q_{2}$ are zero.

Set
\begin{align}\label{luw4.13}
\cJ_{jir}:=\Big\langle
\big(\nabla_{\frac{\p}{\p z_{j}}}^{X}J\big)
\frac{\p}{\p z_{i}},
\frac{\p}{\p z_{r}}\Big\rangle, \quad
\cJ_{\ov{j}\,\ov{i}\ov{r}}:=
\Big\langle \big(\nabla_{\frac{\p}
{\p \ov{z}_{j}}}^{X}J\big)\frac{\p}{\p \ov{z}_{i}},
\frac{\p}{\p \ov{z}_{r}}\Big\rangle=\ov{\cJ_{jir}}.
\end{align}
From Lemma \ref{4.9}, (\ref{1.6}), (\ref{lu0.10}) and (\ref{lu0.7}), we get
\begin{equation}\label{lu0.28}
\begin{split}
\Big(&\mO_{1}\cL^{-1}\cP^{\bot}\mO_{1}\cP\Big)(Z, Z')\\
&=\frac{4\pi}{9}\bigg\{\Big(\Big\langle \big(\nabla_{z}^{X}J\big)z,
\frac{\p}{\p z_{i}} \Big\rangle b^{+}_{i}-
b_{i}\Big\langle \big(\nabla_{\ov{z}}^{X}J\big)\ov{z},
\frac{\p}{\p \ov{z}_{i}} \Big\rangle \Big)   \\
&\qquad\qquad\times\Big(\frac{b_{j}b_{k}}{4\pi}\Big\langle
\big(\nabla_{\frac{\p}{\p\ov{z}_{k}}}^{X}J\big)\ov{z}',
\frac{\p}{\p \ov{z}_{j}}\Big\rangle
+b_{j}\Big\langle \big(\nabla_{\ov{z}'}^{X}J\big)\ov{z}',
\frac{\p}{\p \ov{z}_{j}}\Big\rangle \Big)\cP\bigg\}(Z, Z') \\
&\sim \frac{1}{9}\bigg\{\Big[\Big\langle \big(\nabla_{z}^{X}J\big)z,
\frac{\p}{\p z_{i}} \Big\rangle b^{+}_{i}-
b_{i}\Big\langle \big(\nabla_{\ov{z}}^{X}J\big)\ov{z},
\frac{\p}{\p \ov{z}_{i}} \Big\rangle \Big]
b_{j}b_{k}\Big\langle
\big(\nabla_{\frac{\p}{\p\ov{z}_{k}}}^{X}J\big)\ov{z}',
\frac{\p}{\p \ov{z}_{j}}\Big\rangle\cP\bigg\}(Z, Z')\\
&\sim \frac{1}{9}\bigg\{\Big\langle \big(\nabla_{z}^{X}J\big)z,
\frac{\p}{\p z_{i}} \Big\rangle
\Big\langle \big(\nabla_{\frac{\p}
{\p\ov{z}_{k}}}^{X}J\big)\ov{z}',
\frac{\p}{\p \ov{z}_{j}}\Big\rangle b^{+}_{i}
b_{j}b_{k}\cP\bigg\}(Z, Z'),
\end{split}
\end{equation}
where in the last relation $\sim$ of \eqref{lu0.28}
we used
$b_{i}\Big\langle \big(\nabla_{\ov{z}}^{X}J\big)\ov{z},
\frac{\p}{\p \ov{z}_{i}} \Big\rangle \cP
= \Big\langle \big(\nabla_{\ov{z}}^{X}J\big)\ov{z},
-2\pi \ov{z}' \Big\rangle \cP$.

By Theorem \ref{lut1.1}, (\ref{1.6}) and  (\ref{1.6a}),
we get $b^{+}_{i}b_{j}b_{k}=b_{j}b_{k}b^{+}_{i}
+4\pi\big(\delta_{ij}b_{k}+\delta_{ik}b_{j}\big)$ and
\begin{align}\label{luw4.19}\begin{split}
\cL^{-1}\cP^{\bot}\big(z_{s}z_{t}b^{+}_{i}b_{j}b_{k}\cP\big)
&=4\pi\cL^{-1}\cP^{\bot}\big(z_{s}z_{t}(\delta_{ij}b_{k}
+\delta_{ik}b_{j})\cP\big)\\
&=4\pi\cL^{-1}\cP^{\bot}\big((\delta_{ij}b_{k}
 +\delta_{ik}b_{j})z_{s}z_{t}\cP\big) \\
 &= (\delta_{ij}b_{k}+\delta_{ik}b_{j})z_{s}z_{t}\cP.
\end{split}\end{align}
By \eqref{1.6}, \eqref{1.6a},
\eqref{4.4}, \eqref{lu0.28} and \eqref{luw4.19} we obtain
\begin{align}\label{lu0.31}\begin{split}
I_{1}(Z, Z') &\sim
\frac{1}{9}\cJ_{sti}\cJ_{\ov{k}\,\ov{l}\ov{j}}
\big((\delta_{ij}b_{k}z_{s}z_{t}
+\delta_{ik}b_{j}z_{s}z_{t})\ov{z}'_{l}\cP\big)(Z, Z')\\
&= \frac{1}{9}\cJ_{sti}\cJ_{\ov{k}\,\ov{l}\ov{j}}
\Big[-2\delta_{ij}\delta_{ks}z_{t}
-2\delta_{ij}\delta_{kt}z_{s}
+2\pi\delta_{ij}z_{s}z_{t}(\ov{z}_{k}-\ov{z}'_{k})\\
 &\qquad\qquad\qquad\qquad -2\delta_{ik}\delta_{js}z_{t}
 -2\delta_{ik}\delta_{jt}z_{s}
 +2\pi\delta_{ik}z_{s}z_{t}(\ov{z}_{j}
 -\ov{z}'_{j})\Big]\ov{z}'_{l}\cP(Z, Z').
\end{split}\end{align} Recall that $\cJ_{sti}$ is
anti-symmetric on $t$ and $i$, thus the contribution of
$-2\delta_{ij}\delta_{kt}z_{s}
-2\delta_{ik}\delta_{jt}z_{s}$ in (\ref{lu0.31}) is zero.
Thus \eqref{lu0.31} yields
\begin{align}\label{luw4.21}
I_{1}(Z, Z')\approx  -\frac{2}{9}\cJ_{sri}\cJ_{\ov{k}\,\ov{q}\ov{j}}
\big(\delta_{ik}\delta_{js}+\delta_{ij}\delta_{ks}\big)
z_{r}\ov{z}'_{q}\cP(Z, Z').
\end{align}
By Lemma \ref{4.9}, (\ref{lus1.5a}) and (\ref{luw4.21}), we get
{\allowdisplaybreaks
\begin{align}\label{lu0.32a}\begin{split}
(d_{x}d_{y}I_{1})(0, 0)=
-\frac{2}{9}\cJ_{jir}\big(\cJ_{\ov{i}\,\ov{j}\ov{q}}
+\cJ_{\ov{j}\,\ov{i}\ov{q}}\big)
dz_{r}\wedge d\ov{z}_{q}.
\end{split}\end{align}}
From (\ref{luw2.2}), (\ref{luw4.13}) and (\ref{lu0.32a}), we get
\begin{align}\label{lu0.35a}
(d_{x}d_{y}I_{3})(0, 0) = (d_{x}d_{y}I_{1})(0, 0).
\end{align}
By (\ref{lus1.5a}), (\ref{lu0.7}), (\ref{lu0.9}) and (\ref{4.4}),
we get
\begin{equation}\label{luw4.12}
\begin{split}
 I_{5}(Z, Z') &\sim
\frac{\pi^{2}}{9}\Bigg\{\bigg(\Big\langle
\big(\nabla_{\ov{z}}^{X}J\big)\ov{z}''
+ \big(\nabla_{\ov{z}''}^{X}J\big)\ov{z}'',
\ov{z}\Big\rangle \cP\bigg)
 \!\circ\! \bigg(\Big\langle \big(\nabla_{\frac{\p}{\p z_{j}}}^{X}J
\big)\frac{\p}{\p z_{k}}, \frac{\p}{\p z_{l}}
\Big\rangle
z''_{j}z''_{k}z'_{l}\cP\bigg)\Bigg\}(Z, Z') \\
&\approx
\frac{\pi^{2}}{9} \Big\langle \big(\nabla_{\frac{\p}
{\p \ov{z}_{s}}}^{X}J\big)\frac{\p}{\p \ov{z}_{t}},
\ov{z}\Big\rangle
\Big\langle \big(\nabla_{\frac{\p}{\p z_{j}}}^{X}J\big)
\frac{\p}{\p z_{k}}, z'\Big\rangle
\Big\{\cP\circ\big(\ov{z}''_{s}\ov{z}''_{t}z''_{j}z''_{k}
\cP\big)\Big\}(Z, Z') \\
 & \approx \frac{\pi^{2}}{9} \Big\langle \big(\nabla_{\frac{\p}
{\p \ov{z}_{s}}}^{X}J\big)\frac{\p}{\p \ov{z}_{t}},
\ov{z}\Big\rangle
\Big\langle \big(\nabla_{\frac{\p}{\p z_{j}}}^{X}J\big)
\frac{\p}{\p z_{k}}, z'\Big\rangle \cP(Z, Z')
\Big(\cP\circ \big(\ov{z}_{s}\ov{z}_{t}z_{j}z_{k}\cP\big)\Big)(0, 0),
\end{split}
\end{equation}
where in the last equation we use $\cP(0, 0)=1$, since we need to
compute the constant coefficient of $\cP$ in
$\cP\circ\big(\ov{z}''_{s}\ov{z}''_{t}z''_{j}z''_{k}\cP\big)$.

By (\ref{1.6}) and (\ref{1.6a}), we get
\begin{align}\label{lu0.16}\begin{split}
(\ov{z}_{s}\ov{z}_{t}z_{j}z_{k}\cP)(Z, 0)&=
\frac{1}{4\pi^{2}}(z_{j}z_{k}b_{s}b_{t}\cP)(Z, 0),
\\
z_{j}z_{k}b_{s}b_{t}&=
b_{s}b_{t}z_{j}z_{k}+2\delta_{js}b_{t}z_{k}
+2\delta_{jt}b_{s}z_{k}   \\
&\quad +2\delta_{ks}b_{t}z_{j}
+2\delta_{kt}b_{s}z_{j}+4\delta_{jt}\delta_{ks}
+4\delta_{js}\delta_{kt}.
\end{split}\end{align}
From Theorem \ref{lut1.1} and (\ref{lu0.16}), we get
\begin{align}\label{luw4.14}
\Big(\cP\circ\big(\ov{z}_{s}\ov{z}_{t}z_{j}z_{k}\cP\big)\Big)(0, 0)
&=\frac{1}{4\pi^{2}}(4\delta_{jt}\delta_{ks}
+4\delta_{js}\delta_{kt})\cP(0, 0)   \\
&=\frac{1}{\pi^{2}}(\delta_{jt}\delta_{ks}+\delta_{js}\delta_{kt}).
\nonumber
\end{align}
From (\ref{luw4.13}), (\ref{luw4.12}) and (\ref{luw4.14}), we obtain
\begin{align}\label{lu0.21a}
(d_{x}d_{y}I_{5})(0, 0)=
-\frac{1}{9}\cJ_{jir}\big(\cJ_{\ov{i}\,\ov{j}\ov{q}}
+\cJ_{\ov{j}\,\ov{i}\ov{q}}\big)
dz_{r}\wedge d\ov{z}_{q}.
\end{align}
By (\ref{lu0.7}), (\ref{lu0.9}) and (\ref{4.4}), we get
\begin{equation}\label{luw4.15}
\begin{split}
&I_{6}(Z, Z')\approx
-\frac{\pi^{2}}{9}\Big\langle
\big(\nabla_{\frac{\p}{\p z_{j}}}^{X}J\big)z,
\frac{\p}{\p z_{l}}\Big\rangle
\Big\langle \big(\nabla_{\frac{\p}
{\p \ov{z}_{s}}}^{X}J\big)\ov{z}', \frac{\p}
{\p \ov{z}_{t}}\Big\rangle
\Big(\cP\circ \big(z''_{j}z''_{l}\ov{z}''_{s}\ov{z}''_{t}
\cP\big)\Big)(Z, Z')  \\
 &\approx -\frac{\pi^{2}}{9}\Big\langle
\big(\nabla_{\frac{\p}{\p z_{j}}}^{X}J\big)z,
\frac{\p}{\p z_{k}}\Big\rangle
\Big\langle \big(\nabla_{\frac{\p}{\p \ov{z}_{s}}}^{X}J
\big)\ov{z}', \frac{\p}{\p \ov{z}_{t}}\Big\rangle \cP(Z, Z')
\Big(\cP\circ \big(z_{j}z_{k}\ov{z}_{s}\ov{z}_{t}
\cP\big)\Big)(0, 0).
\end{split}
\end{equation}
Thus by (\ref{luw4.13}), (\ref{luw4.14}) and (\ref{luw4.15}),
we get
{\allowdisplaybreaks
\begin{align}\label{lu0.28a}\begin{split}
(d_{x}d_{y}I_{6})(0, 0)=&
-\frac{1}{9}\cJ_{jrk}\cJ_{\ov{s}\,\ov{q}\ov{t}}
\big(\delta_{jt}\delta_{ks}+\delta_{js}\delta_{kt}\big)dz_{r}
\wedge d\ov{z}_{q}
 \\=&
-\frac{1}{9}\cJ_{jir}\big(\cJ_{\ov{i}\,\ov{j}\ov{q}}
+\cJ_{\ov{j}\,\ov{i}\ov{q}}\big)
dz_{r}\wedge d\ov{z}_{q}.
\end{split}\end{align}}

\subsection{Evaluation of $(d_{x}d_{y}I_{2})(0, 0)$: part I}
\label{luws4.2}
Recall that by \cite[Lemma\,2.1]{MM08a} we have
\begin{multline}\label{lu4.21}
\mO_{2}\cP=
\bigg\{\frac{1}{3}b_{i}b_{j}
\Big\langle R_{x_{0}}^{TX}(\mR, \frac{\p}{\p \ov{z}_{i}})
\mR, \frac{\p}{\p\ov{z}_{j}}\Big\rangle
+\frac{1}{2}b_{i}\Big[\sum_{|\alpha|=2}
\big(\partial^{\alpha}R^{L}\big)(\mR,
\frac{\p}{\p\ov{z}_{i}})\frac{Z^{\alpha}}{\alpha!}\Big]\\
+\frac{4}{3}b_{j}\Big[\Big\langle R^{TX}\Big(\frac{\p}{\p z_{i}},
\frac{\p}{\p\ov{z}_{i}}\Big)\mR,
\frac{\p}{\p\ov{z}_{j}}\Big\rangle
-\Big\langle R^{TX}\Big(\mR, \frac{\p}{\p z_{i}}\Big)
\frac{\p}{\p \ov{z}_{i}},
\frac{\p}{\p\ov{z}_{j}}\Big\rangle \Big] \\
-2\pi\sqrt{-1}\Big\langle
\big(\nabla^{X}\nabla^{X}J\big)_{(\mR, \mR)}
\frac{\p}{\p z_{i}}, \frac{\p}{\p \ov{z}_{i}}
\Big\rangle+
4\Big\langle R^{TX}\Big(\frac{\p}{\p z_{i}},
\frac{\p}{\p z_{j}}\Big)
\frac{\p}{\p\ov{z}_{i}},
\frac{\p}{\p \ov{z}_{j}}\Big\rangle\bigg\}\cP\\
+ \bigg[-\frac{1}{3}\cL\Big\langle R^{TX}\Big(\mR,
\frac{\p}{\p z_{j}}\Big)\mR,
\frac{\p}{\p\ov{z}_{j}}\Big\rangle
+\frac{4\pi^{2}}{9}\big|(\nabla_{\mR}^{X}J)\mR\big|^{2}
+\Phi_{x_{0}}\bigg]\cP.
\end{multline}
Set
{\allowdisplaybreaks
\begin{align}\label{4.21}\begin{split}
I_{21}(Z, Z')&=
\frac{1}{3}\bigg(\cL^{-1}\cP^{\bot}
b_{i}b_{j}\Big\langle R_{x_{0}}^{TX}(\mR,
\frac{\p}{\p \ov{z}_{i}})
\mR, \frac{\p}{\p\ov{z}_{j}}\Big\rangle \cP\bigg)(Z, Z'), \\
I_{22}(Z, Z')&=
\frac{1}{2}\bigg(\cL^{-1}\cP^{\bot}b_{i}\Big[\sum_{|\alpha|=2}
\big(\partial^{\alpha}R^{L}\big)\frac{Z^{\alpha}}{\alpha!}
(\mR, \frac{\p}{\p\ov{z}_{i}})\Big]\cP\bigg)(Z, Z'), \\
I_{23}(Z, Z')&=
\frac{4}{3}\bigg\{\cL^{-1}b_{j}
\Big\langle R^{TX}\Big(\frac{\p}{\p z_{i}},
\frac{\p}{\p\ov{z}_{i}}\Big)\mR
-  R^{TX}\Big(\mR, \frac{\p}{\p z_{i}}\Big) \frac{\p}{\p \ov{z}_{i}},
\frac{\p}{\p\ov{z}_{j}}\Big\rangle\cP\bigg\}(Z, Z'), \\
I_{24}(Z, Z')&=
-2\pi\sqrt{-1}\bigg(\cL^{-1}\cP^{\bot}
\Big\langle \big(\nabla^{X}\nabla^{X}J
\big)_{(\mR, \mR)}\frac{\p}{\p z_{i}},
\frac{\p}{\p \ov{z}_{i}} \Big\rangle \cP\bigg)(Z, Z'),\\
I_{25}(Z, Z')&=
-\frac{1}{3}\bigg(\cP^{\bot}\cL^{-1}\cL
\Big\langle R^{TX}\Big(\mR, \frac{\p}{\p z_{i}}\Big)\mR,
\frac{\p}{\p\ov{z}_{i}}\Big\rangle\cP\bigg)(Z, Z'), \\
I_{26}(Z, Z')&=
\frac{4\pi^{2}}{9}\Big(\cL^{-1}\cP^{\bot}
\big|(\nabla_{\mR}^{X}J)\mR\big|^{2}\cP\Big)(Z, Z').
\end{split}\end{align}}
By Theorem \ref{lut1.1},
$\cP^{\bot}\Big\langle R^{TX}\Big(\frac{\p}{\p z_{i}},
\frac{\p}{\p z_{j}}\Big)\frac{\p}{\p\ov{z}_{i}},
\frac{\p}{\p \ov{z}_{j}}\Big\rangle\cP
=\cP^{\bot}\Phi_{x_{0}}\cP=0$.
Thus \eqref{4.4}, \eqref{lu4.21} and \eqref{4.21} yield
\begin{align}\label{4.22}
-I_{2}(Z, Z')=\sum^{6}_{j=1}I_{2j}(Z, Z').
\end{align}
\comment{
As in \cite[(8.3.56), (8.3.63)]{MM07}, we have
\begin{align}\label{luw4.35}\begin{split}
& \big|\nabla^{X}J\big|^{2}=
\sum_{i, j}\big|(\nabla_{e_{i}}^{X}J)e_{j}\big|^{2}=
8\Big\langle \big(\nabla_{\frac{\p}{\p z_{i}}}^{X}J\big)
\frac{\p}{\p z_{j}},
\big(\nabla_{\frac{\p}{\p \ov{z}_{i}}}^{X}J\big)
\frac{\p}{\p \ov{z}_{j}}\Big\rangle,
\\ &
\Big\langle R^{TX}\Big(\frac{\p}{\p z_{i}},
\frac{\p}{\p z_{j}}\Big)\frac{\p}{\p \ov{z}_{i}},
\frac{\p}{\p\ov{z}_{j}}\Big\rangle
= \frac{1}{32}\big|\nabla^{X}J\big|^{2}.
\end{split}
\end{align}
}
We evaluate first the contribution of $I_{2j}$, $j=1,3,5,6$,
in $(d_{x}d_{y}I_{2})(0, 0)$. We recall the following
well-known symmetry properties of the
curvature $R^{TX}$:  for $U, V, W, Y\in TX$, we have
\begin{align}\label{luw4.36}\begin{split}
& \Big\langle R^{TX}(U, V)W, Y\Big\rangle
=\Big\langle R^{TX}(W, Y)U, V\Big\rangle,
\\ &
R^{TX}(U, V)W+R^{TX}(V, W)U+R^{TX}(W, U)V=0.
\end{split}
\end{align}
Using (\ref{1.6a}) and (\ref{luw4.36}), we have
\begin{multline}\label{4.55}
b_{i} b_{j}\Big\langle R^{TX}\Big(\mR,
\frac{\p}{\p \ov{z}_{i}}\Big)\mR,
\frac{\p}{\p \ov{z}_{j}}\Big\rangle
=b_{i} b_{j}\Big\langle R^{TX}\Big(\frac{\p}{\p z_{s}},
\frac{\p}{\p\ov{z}_{i}}\Big)\frac{\p}{\p z_{t}},
\frac{\p}{\p\ov{z}_{j}}\Big\rangle z_{s}z_{t}\\
+2 b_{i} b_{j}\Big\langle R^{TX}\Big(\frac{\p}{\p z_{s}},
\frac{\p}{\p\ov{z}_{i}}\Big)\frac{\p}{\p \ov{z}_{t}},
\frac{\p}{\p\ov{z}_{j}}\Big\rangle z_{s}\ov{z}_{t}
+b_{i} b_{j} \Big\langle R^{TX}\Big(\frac{\p}{\p \ov{z}_{s}},
\frac{\p}{\p\ov{z}_{i}}\Big)\frac{\p}{\p \ov{z}_{t}},
\frac{\p}{\p\ov{z}_{j}}\Big\rangle \ov{z}_{s}\ov{z}_{t}.
\end{multline}
By (\ref{1.6}) and (\ref{1.6a}), we get
\begin{align}\label{luw4.33}
z_{s}\ov{z}_{t}\cP(Z, Z')=
z_{s}(\frac{b_{t}}{2\pi}+\ov{z}'_{t})\cP(Z, Z')
 =\Big(\frac{b_{t}}{2\pi}z_{s}+\frac{\delta_{st}}{\pi}
+z_{s}\ov{z}'_{t}\Big)\cP(Z, Z').
\end{align}
By Theorem \ref{lut1.1}, (\ref{4.21}), (\ref{4.55})
and (\ref{luw4.33}), we get
{\allowdisplaybreaks
\begin{align}\label{lu0.50}\begin{split}
    3I_{21}(Z, Z')=&\:
\frac{1}{8\pi}\Big\langle R^{TX}\Big(\frac{\p}{\p z_{s}},
\frac{\p}{\p\ov{z}_{i}}\Big)\frac{\p}{\p z_{t}},
\frac{\p}{\p\ov{z}_{j}}\Big\rangle
\Big( b_{i}b_{j}z_{s}z_{t}\cP\Big)(Z, Z')\\
&+\Big\langle R^{TX}\Big(\frac{\p}{\p z_{s}},
\frac{\p}{\p\ov{z}_{i}}\Big) \frac{\p}{\p \ov{z}_{t}},
\frac{\p}{\p\ov{z}_{j}}\Big\rangle
\Big[\big(\frac{b_{i}b_{j}b_{t}}{12\pi^{2}}z_{s}
+\frac{b_{i}b_{j}}{4\pi}z_{s}\ov{z}'_{t}\big)\cP\Big](Z, Z')\\
&+\frac{1}{4\pi^{2}}\Big\langle R^{TX}\Big(\frac{\p}{\p z_{k}},
\frac{\p}{\p\ov{z}_{i}}\Big) \frac{\p}{\p \ov{z}_{k}},
\frac{\p}{\p\ov{z}_{j}}\Big\rangle
\Big(b_{i}b_{j}\cP\Big)(Z, Z')
+I_{27}(Z, Z'),
\end{split}\end{align}}
\noindent
where
\begin{align}
I_{27}(Z, Z')=\Big\langle R^{TX}\Big(\frac{\p}{\p \ov{z}_{s}},
\frac{\p}{\p\ov{z}_{i}}\Big)
\frac{\p}{\p \ov{z}_{t}},
\frac{\p}{\p\ov{z}_{j}}\Big\rangle
\Big(\cL^{-1}\cP^{\bot}b_{i}b_{j}\ov{z}_{s}\ov{z}_{t}
\cP\Big)(Z, Z').
\end{align}
Note that by Theorem \ref{lut1.1}, (\ref{1.6}) and (\ref{1.6a}),
\begin{equation}\label{luw4.50}
\begin{split}
4\pi^{2}\cL^{-1}\cP^{\bot}b_{i}b_{j}\ov{z}_{s}\ov{z}_{t}\cP
&= \cL^{-1}\cP^{\bot}b_{i}b_{j}\big(b_{s}+2\pi\ov{z}'_{s}\big)
(b_{t}+2\pi\ov{z}'_{t}\big)\cP \\
 &= \Big[\frac{1}{16\pi}b_{i}b_{j}b_{s}b_{t}
+\frac{1}{6}b_{i}b_{j}\big(b_{s}\ov{z}'_{t}+b_{t}\ov{z}'_{s}\big)
+\frac{\pi}{2}b_{i}b_{j}\ov{z}'_{s}\ov{z}'_{t}\Big]\cP.
\end{split}
\end{equation}
Thus, from (\ref{1.6}), (\ref{luw4.11}) and (\ref{luw4.50}), we get
\begin{align}\label{luw4.51}
I_{27}(Z, Z')\approx 0.
\end{align}
From (\ref{1.6}) and (\ref{1.6a}), we get
{\allowdisplaybreaks
\begin{align}\label{lu0.51}\begin{split}
\big(b_{i}b_{j}\cP\big)(Z, Z')&=4\pi^{2}\big(\ov{z}_{i}
-\ov{z}'_{i}\big)\big(\ov{z}_{j}-\ov{z}'_{j}\big)\cP(Z, Z'),
 \\
\big(b_{i}b_{j}z_{s}\ov{z}'_{t}\cP\big)(Z, Z')&=
\Big[-4\pi\delta_{js}\ov{z}'_{t}\big(\ov{z}_{i}-\ov{z}'_{i}\big)
-4\pi \delta_{is}\ov{z}'_{t}(\ov{z}_{j}-\ov{z}'_{j})
\\&\qquad\qquad\qquad
+4\pi^{2}z_{s}\ov{z}'_{t}\big(\ov{z}_{i}-\ov{z}'_{i}\big)
\big(\ov{z}_{j}-\ov{z}'_{j}\big)\Big]\cP(Z, Z'),
 \\
 \big( b_{i}b_{j}z_{s}z_{t}\cP\big)(Z, Z')&=
 \Big[4\delta_{it}\delta_{js}-4\pi\delta_{js}z_{t}
 \big(\ov{z}_{i}-\ov{z}'_{i}\big)
+4\delta_{jt}\delta_{is}-4\pi\delta_{jt}z_{s}
\big(\ov{z}_{i}-\ov{z}'_{i}\big)
 \\ &\qquad\qquad\qquad
-4\pi\delta_{is}z_{t}\big(\ov{z}_{j}-\ov{z}'_{j}\big)
-4\pi\delta_{it}z_{s}\big(\ov{z}_{j}-\ov{z}'_{j}\big)
 \\&\qquad\qquad\qquad
  +4\pi^{2}z_{s}z_{t}\big(\ov{z}_{j}-\ov{z}'_{j}\big)
\big(\ov{z}_{i}-\ov{z}'_{i}\big)\Big]\cP(Z, Z'),
\end{split}\end{align}}
and
\begin{align}\label{lu0.52}
\Big(b_{i}b_{j}b_{t}z_{s}\cP\Big)(Z, Z')
=\Big[\big(-2\delta_{ts}b_{i}b_{j}-2\delta_{js}b_{i}b_{t}
-2\delta_{is}b_{j}b_{t}+z_{s}b_{i}b_{j}b_{t}\big)\cP\Big](Z, Z').
\end{align}
By (\ref{1.6}), (\ref{lu0.51}) and (\ref{lu0.52}), we get
\begin{align}\label{luw4.52}
\big(d_{x}d_{y}(b_{i}b_{j}\cP)\big)(0, 0)=0, \quad
\big(d_{x}d_{y}(b_{i}b_{j}b_{t}z_{s}\cP)\big)(0, 0)=0.
\end{align}
\noindent
Substituting (\ref{3.9}), (\ref{luw4.36}),
(\ref{luw4.51})--(\ref{luw4.52}) into (\ref{lu0.50}),
we obtain
\begin{align}\label{lu0.62a}\begin{split}
(d_{x}d_{y}I_{21})(0, 0)  =&
-\frac{\sqrt{-1}}{3}\Big\langle 2 R^{TX}\Big(\frac{\p}{\p z_{j}},
\frac{\p}{\p \ov{z}_{i}}\Big)\frac{\p}{\p z_{i}}
- R^{TX}\Big(\frac{\p}{\p z_{j}}, \frac{\p}{\p z_{i}}\Big)
\frac{\p}{\p \ov{z}_{i}}, \frac{\p}{\p \ov{z}_{j}}\Big\rangle
\omega(x_{0}) \\
 &+\frac{1}{3}\Big\langle 2 R^{TX}\Big(\frac{\p}{\p z_{r}},
\frac{\p}{\p \ov{z}_{j}}\Big)  \frac{\p}{\p z_{j}}
+ R^{TX}\Big(\frac{\p}{\p z_{j}}, \frac{\p}{\p z_{r}}\Big)
\frac{\p}{\p \ov{z}_{j}}, \frac{\p}{\p \ov{z}_{q}}\Big\rangle
dz_{r}\wedge d\ov{z}_{q} \\
&+ \frac{1}{3}\Big\langle R^{TX}\Big(\frac{\p}{\p z_{j}},
\frac{\p}{\p \ov{z}_{r}}\Big)
\frac{\p}{\p \ov{z}_{j}}
+ R^{TX}\Big(\frac{\p}{\p z_{j}}, \frac{\p}{\p \ov{z}_{j}}\Big)
\frac{\p}{\p \ov{z}_{r}}, \frac{\p}{\p \ov{z}_{q}}\Big\rangle
d\ov{z}_{r}\wedge d\ov{z}_{q}.
\end{split}\end{align}
By (\ref{4.21}),
\begin{multline}\label{lu0.54}
\frac{3}{4}I_{23}=
\Big\langle R^{TX}\Big(\frac{\p}{\p z_{i}},
\frac{\p}{\p\ov{z}_{i}}\Big)\frac{\p}{\p z_{s}}
- R^{TX}\Big(\frac{\p}{\p z_{s}},
\frac{\p}{\p z_{i}}\Big)\frac{\p}{\p\ov{z}_{i}},
\frac{\p}{\p \ov{z}_{j}}\Big\rangle
\cL^{-1}b_{j}z_{s}\cP\\
+ \Big\langle R^{TX}\Big(\frac{\p}{\p z_{i}},
\frac{\p}{\p\ov{z}_{i}}\Big)\frac{\p}{\p \ov{z}_{s}}
- R^{TX}\Big(\frac{\p}{\p \ov{z}_{s}},
\frac{\p}{\p z_{i}}\Big)\frac{\p}{\p\ov{z}_{i}},
\frac{\p}{\p \ov{z}_{j}}\Big\rangle
\cL^{-1}b_{j}\ov{z}_{s}\cP.
\end{multline}
By Theorem \ref{lut1.1}, (\ref{1.6}) and (\ref{1.6a}),
\begin{align}\label{lu0.55}\begin{split}
\Big(\cL^{-1}\cP^{\bot}b_{j}z_{s}\cP\Big)(Z, Z')&=
\frac{1}{4\pi}\Big(b_{j}z_{s}\cP\Big)(Z, Z')
\\&=
\frac{1}{4\pi}\Big(-2\delta_{js}+2\pi z_{s}\big(\ov{z}_{j}
-\ov{z}'_{j}\big)\Big)\cP(Z, Z').
\end{split} \end{align}
Note that by (\ref{1.6}),
$\ov{z}_{s}\cP=\Big(\dfrac{b_{s}}{2\pi}+\ov{z}'_{s}\Big)\cP$.
Thus from Theorem \ref{lut1.1}, we get
\begin{align}\label{lu0.56}\begin{split}
\Big(\cL^{-1}b_{j}\ov{z}_{s}\cP\Big)(Z, Z')
&=
\Big[\big(\frac{b_{j}b_{s}}{16\pi^{2}}
+\frac{b_{j}}{4\pi}\ov{z}'_{s}\big)\cP\Big](Z, Z')
\\ &=
\Big[\frac{1}{4}\big(\ov{z}_{j}-\ov{z}'_{j}\big)
\big(\ov{z}_{s}-\ov{z}'_{s}\big)
+\frac{1}{2}\ov{z}'_{s}\big(\ov{z}_{j}-\ov{z}'_{j}\big)\Big]
\cP(Z, Z').
\end{split}\end{align}
As in (\ref{luw4.52}), we get
\begin{align}\label{luw4.54}
\big(d_{x}d_{y}(\cL^{-1}b_{j}\ov{z}_{s}\cP)\big)(0, 0)
=\frac{1}{2}d\ov{z}_{j}\wedge d\ov{z}_{s}.
\end{align}
From (\ref{3.9}), (\ref{luw4.36}), (\ref{lu0.54}),
(\ref{lu0.55}) and (\ref{luw4.54}), we get
\begin{align}\label{lu0.66a}\begin{split}
(d_{x}d_{y}I_{23})(0, 0)
=&\:
\frac{4\sqrt{-1}}{3}\Big\langle R^{TX}
(\frac{\p}{\p z_{j}}, \frac{\p}{\p \ov{z}_{i}})
\frac{\p}{\p z_{i}}  -
 2 R^{TX}\Big(\frac{\p}{\p z_{j}}, \frac{\p}{\p z_{i}}\Big)
\frac{\p}{\p \ov{z}_{i}}, \frac{\p}{\p \ov{z}_{j}}\Big\rangle
\, \omega(x_{0}) \\&
-\frac{2}{3}\Big\langle R^{TX}\Big(\frac{\p}{\p z_{r}},
\frac{\p}{\p \ov{z}_{i}}\Big)\frac{\p}{\p z_{i}}
+2 R^{TX}\Big(\frac{\p}{\p z_{i}}, \frac{\p}{\p z_{r}}\Big)
\frac{\p}{\p \ov{z}_{i}}, \frac{\p}{\p \ov{z}_{q}}\Big\rangle
dz_{r}\wedge d\ov{z}_{q}
 \\&
-\frac{2}{3}\Big\langle R^{TX}\Big(\frac{\p}{\p z_{i}},
\frac{\p}{\p \ov{z}_{i}}\Big)
\frac{\p}{\p \ov{z}_{r}}
+R^{TX}\Big(\frac{\p}{\p z_{i}}, \frac{\p}{\p \ov{z}_{r}}\Big)
\frac{\p}{\p \ov{z}_{i}}, \frac{\p}{\p \ov{z}_{q}}\Big\rangle
 d\ov{z}_{r}\wedge d\ov{z}_{q}.
\end{split}\end{align}
Clearly, by Theorem \ref{lut1.1} and (\ref{4.21}),
\begin{multline}\label{lu0.40}
- 3 I_{25}(Z, Z')  =
\bigg(\cP^{\bot}\Big\langle R^{TX}\Big(\mR,
\frac{\p}{\p z_{i}}\Big)\mR,
\frac{\p}{\p\ov{z}_{i}}\Big\rangle\cP\bigg)(Z, Z') \\
 =\Big\langle R^{TX}\Big(\frac{\p}{\p z_{j}},
\frac{\p}{\p z_{i}}\Big)\frac{\p}{\p \ov{z}_{k}}
+R^{TX}\Big(\frac{\p}{\p \ov{z}_{k}},
\frac{\p}{\p z_{i}}\Big)\frac{\p}{\p z_{j}},
\frac{\p}{\p \ov{z}_{i}}\Big\rangle
\Big(\cP^{\bot}\circ\big(z_{j}\ov{z}_{k}\cP\big)\Big)(Z, Z') \\
+\Big\langle R^{TX}\Big(\frac{\p}{\p \ov{z}_{j}},
\frac{\p}{\p z_{i}}\Big)\frac{\p}{\p \ov{z}_{k}},
\frac{\p}{\p \ov{z}_{i}}\Big\rangle
\Big(\cP^{\bot}\circ\big(\ov{z}_{j}\ov{z}_{k}\cP\big)\Big)(Z, Z').
\end{multline}
\comment{
By (\ref{1.6}) and (\ref{1.6a}), we get
\begin{multline}\label{luw4.33}
z_{j}\ov{z}_{k}\cP(Z, Z')=
z_{j}(\frac{b_{k}}{2\pi}+\ov{z}'_{k})\cP(Z, Z') =
\Big(\frac{b_{k}}{2\pi}z_{j}+\frac{\delta_{jk}}{\pi}
+z_{j}\ov{z}'_{k}\Big)\cP(Z, Z').
\end{multline}
}
From Theorem \ref{lut1.1}, (\ref{1.6}) and (\ref{luw4.33}), we get
\begin{multline}\label{lu0.40a}
\Big(\cP^{\bot}\circ\big(z_{j}\ov{z}_{k}\cP\big)\Big)(Z, Z')
=\frac{1}{2\pi}\big(b_{k}z_{j}\cP\big)(Z, Z')
=\Big(-\frac{1}{\pi}\delta_{jk}+z_{j}(\ov{z}_{k}
-\ov{z}'_{k})\Big)\cP(Z, Z'),
\end{multline}
and
\begin{equation}\label{lu0.40b}
\begin{split}
\Big(\cP^{\bot}\circ\big(\ov{z}_{j}\ov{z}_{k}\cP\big)\Big)(Z, Z')
&= \bigg\{\cP^{\bot}\Big[\frac{1}{4\pi^{2}}b_{j}b_{k}
+\frac{1}{2\pi}(b_{j}\ov{z}'_{k}
+b_{k}\ov{z}'_{j})\Big]\cP\bigg\}(Z, Z') \\
&=\Big((\ov{z}_{j}-\ov{z}'_{j})(\ov{z}_{k}-\ov{z}'_{k})
+\ov{z}'_{k}(\ov{z}_{j}-\ov{z}'_{j})
+\ov{z}'_{j}(\ov{z}_{k}-\ov{z}'_{k})\Big)\cP(Z, Z')\\
&=\big(\ov{z}_{j}\ov{z}_{k}-\ov{z}'_{j}\ov{z}'_{k}\big)\cP(Z, Z').
\end{split}
\end{equation}
As in \cite[(8.3.56), (8.3.63)]{MM07}, we have
\begin{equation}\label{luw4.35}\begin{split}
& \big|\nabla^{X}J\big|^{2}=
\sum_{i, j}\big|(\nabla_{e_{i}}^{X}J)e_{j}\big|^{2}=
8\Big\langle \big(\nabla_{\frac{\p}{\p z_{i}}}^{X}J\big)
\frac{\p}{\p z_{j}},
\big(\nabla_{\frac{\p}{\p \ov{z}_{i}}}^{X}J\big)
\frac{\p}{\p \ov{z}_{j}}\Big\rangle,
\\ &
\Big\langle R^{TX}\Big(\frac{\p}{\p z_{i}},
\frac{\p}{\p z_{j}}\Big)\frac{\p}{\p \ov{z}_{i}},
\frac{\p}{\p\ov{z}_{j}}\Big\rangle
= \frac{1}{32}\big|\nabla^{X}J\big|^{2}.
\end{split}
\end{equation}
By (\ref{3.9}), (\ref{luw4.36}), (\ref{lu0.40}), (\ref{lu0.40a}),
(\ref{lu0.40b}) and (\ref{luw4.35}), we get
\begin{multline}\label{lu0.41a}
(d_{x}d_{y}I_{25})(0, 0)
=  -\frac{1}{3}\Big\langle R^{TX}\Big(\frac{\p}{\p z_{j}},
\frac{\p}{\p z_{i}}\Big)\frac{\p}{\p \ov{z}_{k}}
+R^{TX}\Big(\frac{\p}{\p \ov{z}_{k}},
\frac{\p}{\p z_{i}}\Big)\frac{\p}{\p z_{j}},
\frac{\p}{\p \ov{z}_{i}}\Big\rangle\\
\times \Big(2\sqrt{-1}\delta_{jk}\, \omega(x_{0})
-dz_{j}\wedge d\ov{z}_{k}\Big)   \\
=\Big[-\frac{1}{96}\big|\nabla^{X}J\big|^{2}
+\frac{1}{3}\Big\langle R^{TX}\Big(\frac{\p}{\p z_{i}},
\frac{\p}{\p \ov{z}_{j}}\Big)\frac{\p}{\p z_{j}},
\frac{\p}{\p \ov{z}_{i}}\Big\rangle\Big]
2\sqrt{-1}\omega(x_{0})   \\
+\frac{1}{3}\Big\langle R^{TX}\Big(\frac{\p}{\p z_{i}},
\frac{\p}{\p z_{r}}\Big)\frac{\p}{\p \ov{z}_{i}}
- R^{TX}\Big(\frac{\p}{\p z_{r}},
\frac{\p}{\p \ov{z}_{i}}\Big)\frac{\p}{\p z_{i}},
\frac{\p}{\p \ov{z}_{q}}\Big\rangle
dz_{r}\wedge d\ov{z}_{q}.
\end{multline}
Then by Lemma \ref{4.9} and (\ref{4.21}), we get
\begin{align}\label{lu0.43}
9I_{26}(Z, Z')=
8\pi^{2}\Big\langle
\big(\nabla_{\frac{\p}{\p z_{i}}}^{X}J\big)
\frac{\p}{\p z_{j}},
\big(\nabla_{\frac{\p}{\p \ov{z}_{s}}}^{X}J\big)
\frac{\p}{\p \ov{z}_{t}}\Big\rangle
\Big(\cL^{-1}\cP^{\bot}z_{i}z_{j}\ov{z}_{s}\ov{z}_{t}\cP\Big)(Z, Z').
\end{align}
By Theorem \ref{lut1.1}, (\ref{1.6}), (\ref{1.6a}) and (\ref{lu0.16}),
we get
\begin{equation}\label{luw4.37a}
\begin{split}
\Big(\cL^{-1}\cP^{\bot}z_{i}z_{j}b_{s}\cP\Big)(Z, Z')&=
\frac{b_{s}}{4\pi}z_{i}z_{j}\cP(Z, Z')
= \frac{1}{4\pi}\big(-2\delta_{is}z_{j}-2\delta_{js}z_{i}
+z_{i}z_{j}b_{s}\big)\cP(Z, Z'),
\end{split}
\end{equation}
\begin{equation}\label{luw4.37b}
\begin{split}
\Big(&\cL^{-1}\cP^{\bot}z_{i}z_{j}b_{s}b_{t}\cP\Big)(Z, Z')
=\frac{1}{2\pi}\Big(\frac{b_{s}b_{t}}{4}z_{i}z_{j}
+\delta_{it}b_{s}z_{j}+\delta_{jt}b_{s}z_{i}
+\delta_{is}b_{t}z_{j}+\delta_{js}b_{t}z_{i}\Big)\cP(Z, Z')\\
&=\frac{1}{2\pi}\Big(-3\delta_{js}\delta_{it}-3\delta_{jt}\delta_{is}
+\frac{1}{2}\delta_{it}z_{j}b_{s}+\frac{1}{2}\delta_{jt}z_{i}b_{s}+
\frac{1}{2}\delta_{is}z_{j}b_{t}+\frac{1}{2}\delta_{js}z_{i}b_{t}
+\frac{1}{4}z_{i}z_{j}b_{s}b_{t}\Big)\cP(Z, Z').
\end{split}
\end{equation}
By (\ref{1.6}), (\ref{luw4.37a})
and (\ref{luw4.37b}), we get
\begin{equation}\label{4.42}
\begin{split}
\Big(&\cL^{-1}\cP^{\bot}z_{i}z_{j}\ov{z}_{s}\ov{z}_{t}
\cP\Big)(Z, Z')  =
\Big(\cL^{-1}\cP^{\bot}z_{i}z_{j}\big(\frac{b_{s}}{2\pi}
+\ov{z}'_{s}\big)
\big(\frac{b_{t}}{2\pi}+\ov{z}'_{t}\big)\cP\Big)(Z, Z') \\
&= \frac{1}{4\pi^{2}}\Big(\cL^{-1}\cP^{\bot}z_{i}z_{j}b_{s}b_{t}
\cP\Big)(Z, Z')+\frac{1}{2\pi}\Big(\cL^{-1}\cP^{\bot}
\big(z_{i}z_{j}\ov{z}'_{t}b_{s}+z_{i}z_{j}\ov{z}'_{s}b_{t}\big)
\cP\Big)(Z, Z') \\
&= \frac{1}{4\pi^{2}}\Big\{-\frac{3}{2\pi}\delta_{it}\delta_{js}
-\frac{3}{2\pi}\delta_{jt}\delta_{is}+
\frac{1}{2}\delta_{it}z_{j}\big(\ov{z}_{s}-\ov{z}'_{s}\big)
+\frac{1}{2}\delta_{jt}z_{i}\big(\ov{z}_{s}-\ov{z}'_{s}\big)
+ \frac{1}{2}\delta_{is}z_{j}\big(\ov{z}_{t}-\ov{z}'_{t}\big)\\
&\qquad\qquad\qquad
+\frac{1}{2}\delta_{js}z_{i}\big(\ov{z}_{t}-\ov{z}'_{t}\big)
+\frac{\pi}{2}z_{i}z_{j}\big(\ov{z}_{t}-\ov{z}'_{t}\big)
\big(\ov{z}_{s}-\ov{z}'_{s}\big) \Big\}\cP(Z, Z') \\
&\qquad+ \frac{1}{8\pi^{2}}\Big\{\big[-2\delta_{is}z_{j}-2\delta_{js}z_{i}
+2\pi z_{i}z_{j}(\ov{z}_{s}-\ov{z}'_{s})\big]\ov{z}'_{t}\\
&\qquad\qquad\qquad+\big[-2\delta_{it}z_{j}-2\delta_{jt}z_{i}
+2\pi z_{i}z_{j}(\ov{z}_{t}-\ov{z}'_{t})\big]
\ov{z}'_{s}\Big\}\cP(Z, Z').
\end{split}
\end{equation}
By (\ref{3.9}), (\ref{lu0.43}) and (\ref{4.42}), we get
\begin{equation}\label{4.43}
\begin{split}
 9(d_{x}d_{y}I_{26})(0, 0)
 =& -\Big\langle \big(\nabla_{\frac{\p}{\p z_{i}}}^{X}J\big)
\frac{\p}{\p z_{j}}, \big(\nabla_{\frac{\p}{\p \ov{z}_{s}}}^{X}J\big)
\frac{\p}{\p \ov{z}_{t}}\Big\rangle \Big[3(\delta_{it}\delta_{js}
+\delta_{jt}\delta_{is})
(-2\sqrt{-1})\omega(x_{0})\\
&+ \delta_{it}dz_{j}\wedge d\ov{z}_{s}
+\delta_{jt}dz_{i}\wedge d\ov{z}_{s}
+\delta_{is}dz_{j}\wedge d\ov{z}_{t}
+\delta_{js}dz_{i}\wedge d\ov{z}_{t} \\
&+ 2\delta_{is}dz_{j}\wedge d\ov{z}_{t}
+2\delta_{js}dz_{i}\wedge d\ov{z}_{t}
+2\delta_{it}dz_{j}\wedge d\ov{z}_{s}
+2\delta_{jt}dz_{i}\wedge d\ov{z}_{s}\Big].
\end{split}
\end{equation}
By (\ref{4.43}),
\begin{multline}\label{lu0.46}
    (d_{x}d_{y}I_{26})(0, 0) =
\frac{2}{3}\sqrt{-1}\Big\langle
\big(\nabla_{\frac{\p}{\p z_{i}}}^{X}J\big)
\frac{\p}{\p z_{j}},
\big(\nabla_{\frac{\p}{\p \ov{z}_{i}}}^{X}J\big)
\frac{\p}{\p \ov{z}_{j}}
+\big(\nabla_{\frac{\p}{\p \ov{z}_{j}}}^{X}J\big)
\frac{\p}{\p \ov{z}_{i}}\Big\rangle \omega(x_{0})\\
-\frac{1}{3}\bigg[\Big\langle
\big(\nabla_{\frac{\p}{\p z_{i}}}^{X}J\big)
\frac{\p}{\p z_{r}},
\big(\nabla_{\frac{\p}{\p \ov{z}_{i}}}^{X}J\big)
\frac{\p}{\p \ov{z}_{q}}
+\big(\nabla_{\frac{\p}{\p \ov{z}_{q}}}^{X}J\big)
\frac{\p}{\p \ov{z}_{i}}\Big\rangle\\
+\Big\langle \big(\nabla_{\frac{\p}{\p z_{r}}}^{X}J\big)
\frac{\p}{\p z_{i}},
\big(\nabla_{\frac{\p}{\p \ov{z}_{i}}}^{X}J\big)
\frac{\p}{\p \ov{z}_{q}}
+\big(\nabla_{\frac{\p}{\p \ov{z}_{q}}}^{X}J\big)
\frac{\p}{\p \ov{z}_{i}}\Big\rangle\bigg]
dz_{r}\wedge d\ov{z}_{q}.
\end{multline}
Note that for $U, V, W\in TX$,
$\big\langle JU, V\big\rangle =\omega(U, V)$,
thus (cf. \cite[(8.3.48)]{MM07}),
\begin{align}\label{luw4.40}
\big\langle (\nabla_{U}^{X}J)V, W\big\rangle
+\big\langle (\nabla_{V}^{X}J)W, U\big\rangle
+\big\langle(\nabla_{W}^{X}J)U, V\big\rangle
=d\omega(U, V, W)=0.
\end{align}
From Lemma \ref{4.9}, (\ref{luw4.40}) and
$\big|\frac{\p}{\p z_{j}}\big|^{2}=\frac{1}{2}$, we have
\begin{align}\label{lu0.47}\begin{split}
\Big\langle \big(\nabla_{\frac{\p}{\p z_{i}}}^{X}J\big)
\frac{\p}{\p z_{r}},
\big(\nabla_{\frac{\p}{\p \ov{z}_{q}}}^{X}J\big)
\frac{\p}{\p\ov{z}_{i}}\Big\rangle
=
2\Big\langle \big(\nabla_{\frac{\p}{\p z_{i}}}^{X}J\big)
\frac{\p}{\p z_{j}}, \frac{\p}{\p z_{r}}
\Big\rangle
\bigg[\Big\langle
\big(\nabla_{\frac{\p}{\p \ov{z}_{i}}}^{X}J\big)
\frac{\p}{\p \ov{z}_{j}},
\frac{\p}{\p\ov{z}_{q}}\Big\rangle
- \Big\langle \big(\nabla_{\frac{\p}{\p \ov{z}_{j}}}^{X}J
\big)\frac{\p}{\p \ov{z}_{i}},
\frac{\p}{\p\ov{z}_{q}}\Big\rangle
\bigg].
\end{split}\end{align}
When we sum \eqref{lu0.47} over $r=q$,
we get by \eqref{luw4.35} (cf. \cite[(8.3.58)]{MM07}),
\begin{align}\label{luw4.41}
\Big\langle \big(\nabla_{\frac{\p}{\p z_{i}}}^{X}J\big)
\frac{\p}{\p z_{q}},
\big(\nabla_{\frac{\p}{\p \ov{z}_{q}}}^{X}J\big)
\frac{\p}{\p\ov{z}_{i}}\Big\rangle
=\frac{1}{16}\big|\nabla^{X}J\big|^{2}.
\end{align}
By (\ref{luw4.13}) and (\ref{lu0.47}), we get
\begin{align}\label{lu0.48}
\Big\langle \big(\nabla_{\frac{\p}{\p z_{i}}}^{X}J\big)
\frac{\p}{\p z_{r}},
\big(\nabla_{\frac{\p}{\p \ov{z}_{i}}}^{X}J\big)
\frac{\p}{\p\ov{z}_{q}}+
\big(\nabla_{\frac{\p}{\p \ov{z}_{q}}}^{X}J\big)
\frac{\p}{\p\ov{z}_{i}}\Big\rangle =2\cJ_{ijr}
\big(2\cJ_{\ov{i}\,\ov{j}\ov{q}}-\cJ_{\ov{j}\,\ov{i}\ov{q}}\big).
\end{align}
By Lemma \ref{4.9}, (\ref{luw4.40}) and
$\big|\frac{\p}{\p z_{j}}\big|^{2}=\frac{1}{2}$, we obtain

\begin{align}\label{lu0.54b}\begin{split}
& \quad\Big\langle \big(\nabla_{\frac{\p}{\p z_{r}}}^{X}J\big)
\frac{\p}{\p z_{i}},
\big(\nabla_{\frac{\p}{\p \ov{z}_{q}}}^{X}J\big)
\frac{\p}{\p\ov{z}_{i}}\Big\rangle
\\
&=2\bigg[\Big\langle \big(\nabla_{\frac{\p}{\p z_{i}}}^{X}J
\big)\frac{\p}{\p z_{j}}, \frac{\p}{\p z_{r}}
\Big\rangle+
\Big\langle \big(\nabla_{\frac{\p}{\p z_{j}}}^{X}J\big)
\frac{\p}{\p z_{r}}, \frac{\p}{\p z_{i}}
\Big\rangle\bigg]
 \! \bigg[\Big\langle
\big(\nabla_{\frac{\p}{\p \ov{z}_{i}}}^{X}J\big)
\frac{\p}{\p \ov{z}_{j}},
\frac{\p}{\p \ov{z}_{q}}\Big\rangle
+\Big\langle \big(\nabla_{\frac{\p}{\p \ov{z}_{j}}}^{X}J
\big)\frac{\p}{\p \ov{z}_{q}},
\frac{\p}{\p \ov{z}_{i}}\Big\rangle\bigg]
 \\&=
4\cJ_{ijr}\big(\cJ_{\ov{i}\,\ov{j}\ov{q}}
-\cJ_{\ov{j}\,\ov{i}\ov{q}}\big).
\end{split}\end{align}
By taking the conjugation of (\ref{lu0.47}), we get
\begin{align}\label{lu0.49}
\Big\langle \big(\nabla_{\frac{\p}{\p z_{r}}}^{X}J\big)
\frac{\p}{\p z_{i}},
\big(\nabla_{\frac{\p}{\p \ov{z}_{i}}}^{X}J\big)
\frac{\p}{\p\ov{z}_{q}}\Big\rangle
= 2\cJ_{\ov{i}\,\ov{j}\ov{q}} \big(\cJ_{ijr} - \cJ_{jir}\big)
= 2\cJ_{ijr}\big(\cJ_{\ov{i}\,\ov{j}\ov{q}}
-\cJ_{\ov{j}\,\ov{i}\ov{q}}\big).
\end{align}
Substituting (\ref{luw4.35}), (\ref{luw4.41}), (\ref{lu0.48}),
(\ref{lu0.54b}) and (\ref{lu0.49}) into (\ref{lu0.46}) yields
\begin{align}\label{lu0.50a}
(d_{x}d_{y}I_{26})(0, 0) =
\frac{\sqrt{-1}}{8}\big|\nabla^{X}J\big|^{2}\omega(x_{0})
-\frac{2}{3}\cJ_{ijr}\big(5\cJ_{\ov{i}\,\ov{j}\ov{q}}
-4\cJ_{\ov{j}\,\ov{i}\ov{q}}\big)
dz_{r}\wedge d\ov{z}_{q}.
\end{align}

\subsection{Evaluation of $(d_{x}d_{y}I_{2})(0, 0)$:
part II}\label{luws4.3}
We evaluate now the contribution of $I_{22}$, $I_{24}$ in
$(d_{x}d_{y}I_{2})(0, 0)$.
The definitions of $\nabla^{X}\nabla^{X}J$ and $R^{TX}$
imply that for $U, V, W, Y\in TX$ (cf. \cite[(8.3.59)]{MM07}),
\begin{align}\label{luw4.55}\begin{split}
& \big(\nabla^{X}\nabla^{X}J\big)_{(U, V)}
-\big(\nabla^{X}\nabla^{X}J\big)_{(V, U)}
=\big[R^{TX}(U, V), J\big],   \\
& \Big\langle \big(\nabla^{X}\nabla^{X}J\big)_{(Y, U)}V,
W\Big\rangle
+\Big\langle\big(\nabla^{X}\nabla^{X}J\big)_{(Y, V)}W, U\Big\rangle
+\Big\langle \big(\nabla^{X}\nabla^{X}J\big)_{(Y, W)}U,
V\Big\rangle=0.
\end{split}\end{align}
Recall that \cite[(8.3.71)]{MM07},
\begin{equation}\label{lu0.58}
\sum_{|\alpha|=2}
\big(\partial^{\alpha}R^{L}\big)(\mR, \frac{\p}{\p\ov{z}_{i}})
\frac{Z^{\alpha}}{\alpha!}
=-\sqrt{-1}\pi\Big\langle\big(\nabla^{X}\nabla^{X}J\big)
_{(\mR, \mR)}\mR, \frac{\p}{\p\ov{z}_{i}}\Big\rangle
-\frac{2\pi}{3}\Big\langle R^{TX}(z, \ov{z})\mR,
\frac{\p}{\p \ov{z}_{i}}\Big\rangle.
\end{equation}
By (\ref{luw4.40}) and  (\ref{luw4.55}), we get
(cf. \cite[(8.3.61)]{MM07}): for $u_1,u_2, u_{3}\in T^{(1,0)}X$,
$\ov{v}_1,\ov{v}_2\in T^{(0,1)}X$,
\begin{equation}\label{bsy3.68}
\begin{split}
&(\nabla ^{X}\nabla ^{X}J)_{(u_1,u_2)}u_{3},\, (\nabla ^{X}
\nabla ^{X}J)_{(\ov{v}_1,\ov{v}_2)}u_{3}\in T^{(0,1)}X, \\
& (\nabla ^{X}\nabla ^{X}J)_{(u_1,\ov{v}_2)}u_{3}\in T^{(1,0)}X, \\
&2\sqrt{-1} \left\langle (\nabla ^{X}\nabla ^{X}J)_{(u_1,\ov{v}_1)}
u_2 ,  \ov{v}_2\right\rangle
= \left\langle  (\nabla ^{X}_{u_1}J) u_2,
(\nabla ^{X}_{\ov{v}_1}J)\ov{v}_2\right\rangle \, .
\end{split}
\end{equation}
In particular, we have
\begin{align}\label{lu0.59}\begin{split}
\Big\langle\big(\nabla^{X}\nabla^{X}J\big)_{(\ov{z}, z)}\ov{z},
\frac{\p}{\p\ov{z}_{i}}\Big\rangle&=0,\\
\Big\langle\big(\nabla^{X}\nabla^{X}J\big)_{(z, z)}u_{1},
\frac{\p}{\p\ov{z}_{i}}\Big\rangle &=
\Big\langle\big(\nabla^{X}\nabla^{X}J\big)_{(\ov{z}, \ov{z})}u_{1},
\frac{\p}{\p\ov{z}_{i}}\Big\rangle=0, \\
\Big\langle\big(\nabla^{X}\nabla^{X}J\big)_{(\ov{z}, z)}u_{1},
\frac{\p}{\p\ov{z}_{i}}\Big\rangle
&=\Big\langle\big(\nabla^{X}\nabla^{X}J\big)_{(z, \ov{z})}u_{1}
- [R^{TX}(z, \ov{z}), J] u_{1}, \frac{\p}{\p\ov{z}_{i}}\Big\rangle\\
&=\Big\langle\big(\nabla^{X}\nabla^{X}J\big)_{(z, \ov{z})}u_{1},
 \frac{\p}{\p\ov{z}_{i}}\Big\rangle.
\end{split}\end{align}
By (\ref{luw4.55}) and (\ref{lu0.59}), we get
\begin{align}\label{lu0.60}\begin{split}
\Big\langle\big(\nabla^{X}\nabla^{X}J\big)_{(z, \ov{z})}\ov{z},
\frac{\p}{\p\ov{z}_{i}}\Big\rangle
=&
\Big\langle \big[R^{TX}(z, \ov{z}), J\big]\ov{z},
\frac{\p}{\p \ov{z}_{i}} \Big\rangle
\\=&
-2\sqrt{-1}\Big\langle R^{TX}(z, \ov{z})\ov{z},
\frac{\p}{\p \ov{z}_{i}} \Big\rangle.
\end{split}\end{align}
By 
(\ref{luw4.55}) and (\ref{bsy3.68})
we get (cf. \cite[(8.3.62)]{MM07}),
\begin{align}\label{luw4.57}
\Big\langle\big(\nabla^{X}\nabla^{X}J\big)_{(u_{1}, u_{2})}\ov{z},
\frac{\p}{\p\ov{z}_{i}}\Big\rangle
=\frac{1}{2\sqrt{-1}}\Big\langle \big(\nabla_{u_{1}}^{X}J\big)u_{2},
\big(\nabla_{\ov{z}}^{X}J\big)\frac{\p}{\p\ov{z}_{i}}
-\big(\nabla_{\frac{\p}{\p \ov{z}_{i}}}^{X}J\big)\ov{z} \Big\rangle.
\end{align}
By (\ref{bsy3.68}), (\ref{lu0.59}),
(\ref{lu0.60}) and (\ref{luw4.57}), we get
\begin{multline}\label{luw4.58}
 -\pi\sqrt{-1}\Big\langle
\big(\nabla^{X}\nabla^{X}J\big)_{(\mR, \mR)}\mR,
\frac{\p}{\p\ov{z}_{i}}\Big\rangle
= -\frac{\pi}{2}\Big\langle \big(\nabla_{z}^{X}J\big)z,
3\big(\nabla_{\ov{z}}^{X}J\big)\frac{\p}{\p \ov{z}_{i}}-
\big(\nabla_{\frac{\p}{\p\ov{z}_{i}}}^{X}J\big)\ov{z}\Big\rangle\\
-2\pi\Big\langle R^{TX}(z, \ov{z})\ov{z},
\frac{\p}{\p\ov{z}_{i}}\Big\rangle
-\pi\sqrt{-1}\Big\langle
\big(\nabla^{X}\nabla^{X}J\big)_{(\ov{z}, \ov{z})}\ov{z},
\frac{\p}{\p\ov{z}_{i}}\Big\rangle.
\end{multline}
By (\ref{4.21}), (\ref{lu0.58}) and (\ref{luw4.58}), we get
\begin{align}\label{lu0.63a}\begin{split}
I_{22}(Z, Z')=&
-\bigg[\frac{\pi}{4}\Big\langle
\big(\nabla_{\frac{\p}{\p z_{j}}}^{X}J\big)\frac{\p}{\p z_{k}},
3\big(\nabla_{\frac{\p}{\p \ov{z}_{s}}}^{X}J\big)
\frac{\p}{\p \ov{z}_{i}}-
\big(\nabla_{\frac{\p}{\p\ov{z}_{i}}}^{X}J\big)
\frac{\p}{\p \ov{z}_{s}}\Big\rangle
 \\&\ \ \ \ \
+\frac{\pi}{3}\Big\langle R^{TX}\Big(\frac{\p}{\p z_{j}},
\frac{\p}{\p \ov{z}_{s}}\Big)\frac{\p}{\p z_{k}},
\frac{\p}{\p \ov{z}_{i}} \Big\rangle\bigg]
\Big(\cL^{-1}b_{i}z_{j}z_{k}\ov{z}_{s}\cP\Big)(Z, Z')
 \\&
-\frac{4\pi}{3}\Big\langle R^{TX}\Big(\frac{\p}{\p z_{j}},
\frac{\p}{\p\ov{z}_{s}}\Big)\frac{\p}{\p\ov{z}_{t}},
\frac{\p}{\p\ov{z}_{i}}\Big\rangle
\Big(\cL^{-1}b_{i}z_{j}\ov{z}_{s}\ov{z}_{t}\cP\Big)(Z, Z')
 \\&
-\frac{\pi}{2}\sqrt{-1}\bigg\{\cL^{-1}b_{i}
\Big\langle\big(\nabla^{X}\nabla^{X}J\big)_{(\ov{z}, \ov{z})}
\ov{z}, \frac{\p}{\p\ov{z}_{i}}\Big\rangle\cP\bigg\}(Z, Z').
\end{split}\end{align}
By (\ref{1.6}) and (\ref{1.6a}), we get
\begin{align}\label{luw4.59}\begin{split}
b_{i}z_{j}z_{k}\ov{z}_{s}\cP&=
b_{i}z_{j}z_{k}\big(\frac{b_{s}}{2\pi}+\ov{z}'_{s}\big)\cP
\\ &=
\Big(\frac{b_{i}b_{s}}{2\pi}z_{j}z_{k}
+\frac{\delta_{js}}{\pi}b_{i}z_{k}
+\frac{\delta_{ks}}{\pi}b_{i}z_{j}
+b_{i}z_{j}z_{k}\ov{z}'_{s}\Big)\cP.
\end{split}\end{align}
Thus, by Theorem \ref{lut1.1}, (\ref{1.6}), (\ref{1.6a})
and (\ref{luw4.59}), as in (\ref{4.42}), we get
\begin{align}\label{lu0.63}\begin{split}
&\Big(\cL^{-1}b_{i}z_{j}z_{k}\ov{z}_{s}\cP\Big)(Z, Z')
\\ &=
\Big(\frac{b_{i}b_{s}}{16\pi^{2}}z_{j}z_{k}
+\frac{\delta_{js}}{4\pi^{2}}b_{i}z_{k}
+\frac{\delta_{ks}}{4\pi^{2}}b_{i}z_{j}
+\frac{b_{i}}{4\pi}z_{j}z_{k}\ov{z}'_{s}\Big)\cP(Z, Z')
 \\&=
\frac{1}{4\pi^{2}}\Big(-\delta_{js}\delta_{ik}
+\pi\delta_{js}z_{k}\big(\ov{z}_{i}-\ov{z}'_{i}\big)-
\delta_{ks}\delta_{ij}+\pi\delta_{ks}z_{j}
\big(\ov{z}_{i}-\ov{z}'_{i}\big)
 \\ &\qquad\qquad -
\pi\delta_{ij}z_{k}\big(\ov{z}_{s}-\ov{z}'_{s}\big)
-\pi\delta_{ik}z_{j}\big(\ov{z}_{s}-\ov{z}'_{s}\big)
+\pi^{2}z_{j}z_{k}\big(\ov{z}_{i}-\ov{z}'_{i}\big)
\big(\ov{z}_{s}-\ov{z}'_{s}\big)\Big)\cP(Z, Z')
 \\&\quad+
\frac{1}{4\pi}\Big(-2\delta_{ij}z_{k}-2\delta_{ik}z_{j}
+2\pi z_{j}z_{k}\big(\ov{z}_{i}-\ov{z}'_{i}\big)\Big)\ov{z}'_{s}
\cP(Z, Z').
\end{split}\end{align}
By (\ref{luw4.11}) and (\ref{lu0.63}), note that the last line of (\ref{lu0.63})
has also the term $z\ov{z}'$, we get
\begin{multline}\label{luw4.60}
\Big(\cL^{-1}b_{i}z_{j}z_{k}\ov{z}_{s}\cP\Big)(Z, Z')
 \approx  - \Big[\frac{1}{4\pi^{2}}\big(\delta_{js}\delta_{ik}
+\delta_{ks}\delta_{ij}\big)\\
+\frac{1}{4\pi}\big(\delta_{js}z_{k}\ov{z}'_{i}
+\delta_{ks}z_{j}\ov{z}'_{i}
+\delta_{ij}z_{k}\ov{z}'_{s}
+\delta_{ik}z_{j}\ov{z}'_{s}\big)\Big]\cP(Z, Z').
\end{multline}
Again by (\ref{1.6}) and (\ref{1.6a}),
\begin{equation}\label{luw4.61}
\begin{split}
&b_{i}z_{j}\ov{z}_{s}\ov{z}_{t}\cP =
b_{i}z_{j}\Big(\frac{b_{s}}{2\pi}+\ov{z}'_{s}\Big)
\Big(\frac{b_{t}}{2\pi}+\ov{z}'_{t}\Big)\cP
\\ &=
\Biggl[\frac{b_{i}b_{s}b_{t}}{4\pi^{2}}z_{j}
+\frac{1}{2\pi^{2}}\big(\delta_{js}b_{i}b_{t}
+\delta_{jt}b_{i}b_{s}\big) +
\frac{b_{i}}{2\pi}\big(b_{s}z_{j}+2\delta_{js}\big)\ov{z}'_{t}
+\frac{b_{i}}{2\pi}\big(b_{t}z_{j}+2\delta_{jt}\big)\ov{z}'_{s}
+b_{i}z_{j}\ov{z}'_{s}\ov{z}'_{t}\Biggr]\cP.
\end{split}
\end{equation}
By Theorem \ref{lut1.1} and (\ref{luw4.61}), we get
\begin{multline}\label{luw4.63}
\cL^{-1}b_{i}z_{j}\ov{z}_{s}\ov{z}_{t}\cP
 = \Biggl[\frac{b_{i}b_{s}b_{t}}{48\pi^{3}}z_{j}
+\frac{1}{16\pi^{3}}\big(\delta_{js}b_{i}b_{t}
+\delta_{jt}b_{i}b_{s}\big)
 \ + \frac{b_{i}}{16\pi^{2}}\big(b_{s}z_{j}
+4\delta_{js}\big)\ov{z}'_{t}\\
+\frac{b_{i}}{16\pi^{2}}\big(b_{t}z_{j}+4\delta_{jt}\big)\ov{z}'_{s}
+\frac{b_{i}}{4\pi}z_{j}\ov{z}'_{s}\ov{z}'_{t}\Biggr]\cP.
\end{multline}
By (\ref{1.6}), (\ref{1.6a}) and (\ref{luw4.11}), we get
\begin{align}\label{luw4.64}
\Big(\frac{b_{i}}{16\pi^{2}}\big(b_{s}z_{j}
+4\delta_{js})\ov{z}'_{t}\cP\Big)(Z, Z')
\approx \frac{1}{4\pi}\big(\delta_{js}\ov{z}_{i}
-\delta_{ij}\ov{z}_{s}\big)\ov{z}'_{t}\cP(Z, Z').
\end{align}
By (\ref{luw4.52}), (\ref{luw4.63}) and (\ref{luw4.64}), we obtain
\begin{equation}\label{luw4.65}
\begin{split}
\Big(d_{x}d_{y}\big(\cL^{-1}b_{i}z_{j}\ov{z}_{s}\ov{z}_{t}
\cP\big)\Big)(0, 0)
&=\frac{1}{4\pi}\big(\delta_{js}d\ov{z}_{i}
-\delta_{ij}d\ov{z}_{s}\big)\wedge d\ov{z}_{t}
+\frac{1}{4\pi}\big(\delta_{jt}d\ov{z}_{i}
-\delta_{ij}d\ov{z}_{t}\big)\wedge d\ov{z}_{s}\\
&=\frac{1}{4\pi}\big(\delta_{js}d\ov{z}_{i}
\wedge d\ov{z}_{t}+\delta_{jt}d\ov{z}_{i}\wedge d\ov{z}_{s}\big).
\end{split}
\end{equation}
Finally, by Theorem \ref{lut1.1}, (\ref{1.6}) and (\ref{1.6a}),
as in (\ref{luw4.51}), we get
\begin{align}\label{luw4.66}\begin{split}
 &\Big(\cL^{-1}b_{i}\ov{z}_{j}\ov{z}_{s}\ov{z}_{t}\cP\Big)(Z, Z')
 =\Big(\cL^{-1}b_{i}\big(\frac{b_{j}}{2\pi}+\ov{z}'_{j}\big)
\big(\frac{b_{s}}{2\pi}+\ov{z}'_{s}\big)\big(\frac{b_{t}}{2\pi}
+\ov{z}'_{t}\big)\cP\Big)(Z, Z')\\
&\hspace{20mm}\sim
\bigg\{\cL^{-1}b_{i}\Big[\frac{b_{j}b_{s}b_{t}}{8\pi^{3}}
+\frac{1}{4\pi^{2}}\big(b_{j}b_{s}\ov{z}'_{t}
+b_{j}b_{t}\ov{z}'_{s}+b_{s}b_{t}\ov{z}'_{j}\big)\Big]\cP
\bigg\}(Z, Z') \\
&\hspace{20mm}\approx 0.
\end{split}\end{align}

\noindent
From (\ref{3.9}), (\ref{lu0.63a}), (\ref{luw4.60}), (\ref{luw4.65})
and (\ref{luw4.66}), we obtain
\begin{multline}\label{lu0.74}
(d_{x}d_{y}I_{22})(0, 0) =
\Bigg\{\frac{1}{8\pi}\Big\langle
\big(\nabla_{\frac{\p}{\p z_{j}}}^{X}J\big)\frac{\p}{\p z_{i}},
\big(\nabla_{\frac{\p}{\p \ov{z}_{j}}}^{X}J\big)
\frac{\p}{\p \ov{z}_{i}}
+ \big(\nabla_{\frac{\p}{\p \ov{z}_{i}}}^{X}J\big)
\frac{\p}{\p \ov{z}_{j}} \Big\rangle \\
+\frac{1}{12\pi}\Big\langle R^{TX}\Big(\frac{\p}{\p z_{i}},
\frac{\p}{\p\ov{z}_{i}}\Big)\frac{\p}{\p z_{j}}
+  R^{TX}\Big(\frac{\p}{\p z_{j}}, \frac{\p}{\p\ov{z}_{i}}\Big)
\frac{\p}{\p z_{i}},
\frac{\p}{\p \ov{z}_{j}}\Big\rangle\Bigg\}
\big(-2\pi\sqrt{-1}\big)\omega(x_{0}) \\
+
\frac{1}{8}\bigg[\Big\langle \big(\nabla_{\frac{\p}{\p z_{j}}}^{X}J
\big)\frac{\p}{\p z_{r}},
\big(\nabla_{\frac{\p}{\p \ov{z}_{j}}}^{X}J\big)
\frac{\p}{\p \ov{z}_{q}}
+ \big(\nabla_{\frac{\p}{\p \ov{z}_{q}}}^{X}J\big)
\frac{\p}{\p \ov{z}_{j}} \Big\rangle\\
+ \Big\langle \big(\nabla_{\frac{\p}{\p z_{r}}}^{X}J\big)
\frac{\p}{\p z_{j}},
\big(\nabla_{\frac{\p}{\p \ov{z}_{j}}}^{X}J\big)
\frac{\p}{\p \ov{z}_{q}}
+\big(\nabla_{\frac{\p}{\p \ov{z}_{q}}}^{X}J\big)
\frac{\p}{\p \ov{z}_{j}} \Big\rangle \bigg]
dz_{r}\wedge d\ov{z}_{q}\\
+ \frac{1}{6}\Big\langle R^{TX}\Big(\frac{\p}{\p z_{j}},
\frac{\p}{\p\ov{z}_{j}}\Big)\frac{\p}{\p z_{r}}
 + R^{TX}\Big(\frac{\p}{\p z_{r}},
\frac{\p}{\p\ov{z}_{j}}\Big)\frac{\p}{\p z_{j}},
\frac{\p}{\p \ov{z}_{q}} \Big\rangle
dz_{r}\wedge d\ov{z}_{q} \\
+\frac{1}{3}\Big\langle R^{TX}\Big(\frac{\p}{\p z_{j}},
\frac{\p}{\p\ov{z}_{j}}\Big)\frac{\p}{\p \ov{z}_{r}}
+  R^{TX}\Big(\frac{\p}{\p z_{j}},
\frac{\p}{\p\ov{z}_{r}}\Big)\frac{\p}{\p \ov{z}_{j}},
\frac{\p}{\p \ov{z}_{q}}\Big\rangle
d\ov{z}_{r}\wedge d\ov{z}_{q}.
\end{multline}
Note that by (\ref{luw4.36}),
\begin{align}\label{luw4.68}\begin{split}
&  R^{TX}\Big(\frac{\p}{\p z_{j}}, \frac{\p}{\p\ov{z}_{j}}\Big)
\frac{\p}{\p z_{r}}  =
R^{TX}\Big(\frac{\p}{\p z_{r}}, \frac{\p}{\p\ov{z}_{j}}\Big)
\frac{\p}{\p z_{j}}
+ R^{TX}\Big(\frac{\p}{\p z_{j}}, \frac{\p}{\p z_{r}}\Big)
\frac{\p}{\p \ov{z}_{j}}.
\end{split}\end{align}
By  (\ref{luw4.36}), (\ref{luw4.35}), (\ref{luw4.41}),
(\ref{lu0.48}), (\ref{lu0.54b}),
(\ref{lu0.49}), (\ref{lu0.74}) and (\ref{luw4.68}), we get
\begin{multline}\label{lu0.75}
 (d_{x}d_{y}I_{22})(0, 0)=
-\sqrt{-1}\bigg[\frac{5}{96}\big|\nabla^{X}J\big|^{2}+
\frac{1}{3}\Big\langle R^{TX}\Big(\frac{\p}{\p z_{j}},
\frac{\p}{\p\ov{z}_{i}}\Big)\frac{\p}{\p z_{i}},
\frac{\p}{\p \ov{z}_{j}} \Big\rangle \bigg]\omega(x_{0})\\
+ \frac{1}{4}\cJ_{jir} \big(5 \cJ_{\ov{j}\,\ov{i}\ov{q}}
 -4 \cJ_{\ov{i}\,\ov{j}\ov{q}}\big)
dz_{r}\wedge d\ov{z}_{q} \\
+ \frac{1}{6}\Big\langle R^{TX}\Big(\frac{\p}{\p z_{j}},
\frac{\p}{\p z_{r}}\Big)\frac{\p}{\p \ov{z}_{j}}
+  2 R^{TX}\Big(\frac{\p}{\p z_{r}},
\frac{\p}{\p\ov{z}_{j}}\Big)\frac{\p}{\p z_{j}},
\frac{\p}{\p \ov{z}_{q}} \Big\rangle
dz_{r}\wedge d\ov{z}_{q} \\
+ \frac{1}{3}\Big\langle R^{TX}\Big(\frac{\p}{\p z_{j}},
\frac{\p}{\p\ov{z}_{j}}\Big)\frac{\p}{\p \ov{z}_{r}}
+ R^{TX}\Big(\frac{\p}{\p z_{j}}, \frac{\p}{\p\ov{z}_{r}}\Big)
\frac{\p}{\p \ov{z}_{j}}, \frac{\p}{\p \ov{z}_{q}}\Big\rangle
d\ov{z}_{r}\wedge d\ov{z}_{q}.
\end{multline}
By (\ref{luw4.55}), (\ref{bsy3.68}) and (\ref{lu0.59}),
we get (cf. \cite[(2.28)]{MM08a}),
\begin{align} \label{4.49}\begin{split}
\Big\langle \big(\nabla^{X}\nabla^{X}J\big)_{(\mR, \mR)}
\frac{\p}{\p z_{i}}, \frac{\p}{\p\ov{z}_{i}}\Big\rangle
&= 2\Big\langle \big(\nabla^{X}\nabla^{X}J\big)_{(z, \ov{z})}
\frac{\p}{\p z_{i}},\frac{\p}{\p\ov{z}_{i}}\Big\rangle\\
&= -\sqrt{-1} \Big\langle \big(\nabla_{z}^{X}J\big)
\frac{\p}{\p z_{i}}, \big(\nabla_{\ov{z}}^{X}J\big)
\frac{\p}{\p\ov{z}_{i}}\Big\rangle.
\end{split}\end{align}
By (\ref{luw4.33}) and (\ref{lu0.55}), we get
\begin{align}\label{4.50}
\Big(\cL^{-1}\cP^{\bot}z_{s}\ov{z}_{t}\cP\Big)(Z, Z')=
-\frac{1}{4\pi^{2}}\Big(\delta_{st}-\pi z_{s}\big(\ov{z}_{t}
-\ov{z}'_{t}\big)\Big)\cP(Z, Z'). \end{align}
\comment{
From (\ref{4.21}), (\ref{4.49}) and (\ref{4.50}), we obtain
\begin{align}\label{lu0.53}
I_{24}(Z, Z')=&
\frac{1}{2\pi} \Big\langle
\big(\nabla_{\frac{\p}{\p z_{s}}}^{X}J\big)
\frac{\p}{\p z_{i}},
\big(\nabla_{\frac{\p}{\p \ov{z}_{t}}}^{X}J\big)
\frac{\p}{\p\ov{z}_{i}}\Big\rangle
\Big(\delta_{st}-\pi z_{s}\big(\ov{z}_{t}-\ov{z}'_{t}\big)\Big)
\cP(Z, Z').
\end{align}
}
By (\ref{3.9}), (\ref{luw4.35}), (\ref{lu0.54b}),
(\ref{4.49}) and (\ref{4.50}), we obtain
\begin{equation}\label{lu0.54a}
\begin{split}
(d_{x}d_{y}I_{24})(0, 0)&=
\Big\langle \big(\nabla_{\frac{\p}{\p z_{j}}}^{X}J\big)
\frac{\p}{\p z_{i}},
\big(\nabla_{\frac{\p}{\p \ov{z}_{j}}}^{X}J\big)
\frac{\p}{\p\ov{z}_{i}}
\Big\rangle (-\sqrt{-1})\omega(x_{0}) \\
&\qquad\qquad\qquad+ \frac{1}{2} \Big\langle
\big(\nabla_{\frac{\p}{\p z_{s}}}^{X}J\big)
\frac{\p}{\p z_{i}},
\big(\nabla_{\frac{\p}{\p \ov{z}_{t}}}^{X}J\big)
\frac{\p}{\p\ov{z}_{i}}\Big\rangle
dz_{s}\wedge d\ov{z}_{t}\\
&= -\frac{\sqrt{-1}}{8}\big|\nabla^{X}J\big|^{2}\omega(x_{0})
+2\cJ_{ijr}\big(\cJ_{\ov{i}\,\ov{j}\ov{q}}
-\cJ_{\ov{j}\,\ov{i}\ov{q}}\big) dz_{r}\wedge d\ov{z}_{q}.
\end{split}
\end{equation}
\comment{
Note that by (\ref{luw4.36}),
\begin{align}\label{luw4.68}\begin{split}
&  R^{TX}\Big(\frac{\p}{\p z_{j}}, \frac{\p}{\p\ov{z}_{j}}\Big)
\frac{\p}{\p z_{r}}  =
R^{TX}\Big(\frac{\p}{\p z_{r}}, \frac{\p}{\p\ov{z}_{j}}\Big)
\frac{\p}{\p z_{j}}
+ R^{TX}\Big(\frac{\p}{\p z_{j}}, \frac{\p}{\p z_{r}}\Big)
\frac{\p}{\p \ov{z}_{j}}.
\end{split}\end{align}
}
Combining (\ref{4.22}),  (\ref{luw4.36}), (\ref{lu0.62a}),
(\ref{lu0.66a}),  (\ref{luw4.35}),
(\ref{lu0.41a}), (\ref{lu0.50a}), (\ref{luw4.68}),
(\ref{lu0.75})  and (\ref{lu0.54a}), we obtain
\begin{multline}\label{lu0.78c}
-(d_{x}d_{y}I_{2})(0, 0)=
\sqrt{-1}\Big\langle R^{TX}\Big(\frac{\p}{\p z_{j}},
\frac{\p}{\p \ov{z}_{i}}\Big)
\frac{\p}{\p z_{i}}, \frac{\p}{\p\ov{z}_{j}}
\Big\rangle \,\omega(x_{0}) \\
-\frac{1}{2}\Big\langle R^{TX}\Big(\frac{\p}{\p z_{j}},
\frac{\p}{\p z_{r}}\Big)
\frac{\p}{\p \ov{z}_{j}}, \frac{\p}{\p\ov{z}_{q}}\Big\rangle
dz_{r}\wedge d\ov{z}_{q}
 - \frac{1}{12}\cJ_{jir}\big(\cJ_{\ov{j}\,\ov{i}\ov{q}}
+4\cJ_{\ov{i}\,\ov{j}\ov{q}}\big)
dz_{r}\wedge d\ov{z}_{q}.
\end{multline}
By Lemma \ref{4.9}, (\ref{luw4.40}), (\ref{lu0.47}), (\ref{lu0.54b}),
(\ref{lu0.49}), (\ref{luw4.55}) and (\ref{luw4.57}),
we get (cf. \cite[(8.3.63)]{MM07})
\begin{align}\label{lu0.78b}\begin{split}
\Big\langle R^{TX}&\Big(\frac{\p}{\p z_{j}}, \frac{\p}{\p z_{r}}\Big)
\frac{\p}{\p \ov{z}_{j}}, \frac{\p}{\p\ov{z}_{q}}\Big\rangle
=\frac{\sqrt{-1}}{2} \Big\langle \Big[R^{TX}\Big(\frac{\p}{\p z_{j}},
\frac{\p}{\p z_{r}}\Big), J\Big]
\frac{\p}{\p \ov{z}_{j}}, \frac{\p}{\p\ov{z}_{q}}\Big\rangle\\
&=\frac{\sqrt{-1}}{2}\Big\langle
\Big[\big(\nabla^{X}\nabla^{X}J\big)_{(\frac{\p}{\p z_{j}},
\frac{\p}{\p z_{r}})}
-\big(\nabla^{X}\nabla^{X}J\big)_{(\frac{\p}{\p z_{r}},
\frac{\p}{\p z_{j}})}\Big]
\frac{\p}{\p \ov{z}_{j}},
\frac{\p}{\p \ov{z}_{q}}\Big\rangle  \\
&=\frac{1}{4}\Big\langle \big(\nabla_{\frac{\p}{\p z_{j}}}^{X}J\big)
\frac{\p}{\p z_{r}}
-\big(\nabla_{\frac{\p}{\p z_{r}}}^{X}J\big)\frac{\p}{\p z_{j}},\
\big(\nabla_{\frac{\p}{\p\ov{z}_{j}}}^{X}J\big)
\frac{\p}{\p\ov{z}_{q}}
-\big(\nabla_{\frac{\p}{\p\ov{z}_{q}}}^{X}J\big)
\frac{\p}{\p\ov{z}_{j}}\Big\rangle \\
& =\frac{1}{2}\cJ_{jri}\cJ_{\ov{j}\ov{q}\ov{i}}.
\end{split}\end{align}
Substituting (\ref{lu0.78b}) into (\ref{lu0.78c}) yields
\begin{equation}\label{lu0.78a}
(d_{x}d_{y}I_{2})(0, 0)=
-\sqrt{-1}\Big\langle R^{TX}\Big(\frac{\p}{\p z_{j}},
\frac{\p}{\p \ov{z}_{i}}\Big)
\frac{\p}{\p z_{i}}, \frac{\p}{\p\ov{z}_{j}}\Big\rangle
\omega(x_{0})
+\,\frac{1}{3}\cJ_{jir}\big(\cJ_{\ov{j}\,\ov{i}\ov{q}}
+ \cJ_{\ov{i}\,\ov{j}\ov{q}}\big)
dz_{r}\wedge d\ov{z}_{q}.
\end{equation}
\subsection{Proof of Theorem \ref{t3.1}}\label{luws4.4}
By (\ref{luw2.2}) and (\ref{lu0.78a}), we get
\begin{align}\label{lu0.98}\begin{split}
(d_{x}d_{y}I_{4})(0, 0)
=&(d_{x}d_{y}I_{2})(0, 0).
\end{split}\end{align}
Substituting  (\ref{lu0.32a}), (\ref{lu0.35a}), (\ref{lu0.21a}),
(\ref{lu0.28a}), (\ref{lu0.78a}) and (\ref{lu0.98}) into (\ref{lu0.5}),
we finally obtain
\begin{multline}\label{4.86}
\big(d_{x}d_{y}\cF_{2}\big)(0, 0)=
-\frac{6}{9} \cJ_{jir}\big(\cJ_{\ov{i}\,\ov{j}\ov{q}}
+ \cJ_{\ov{j}\,\ov{i}\ov{q}}\big)dz_{r}\wedge d\ov{z}_{q}
+ 2 (d_{x}d_{y}I_{2})(0, 0)\\
=-2\sqrt{-1}\Big\langle R^{TX}\Big(\frac{\p}{\p z_{j}},
\frac{\p}{\p\ov{z}_{i}}\Big)
\frac{\p}{\p z_{i}}, \frac{\p}{\p\ov{z}_{j}}\Big\rangle
\, \omega(x_{0}).
\end{multline}
\comment{
\begin{multline}\label{4.86}
\big(d_{x}d_{y}\cF_{2}\big)(0, 0)=
\Big(-\frac{1}{9}-\frac{1}{9} - \frac{2}{9}-\frac{2}{9}\Big)
\cJ_{jir}\big(\cJ_{\ov{i}\,\ov{j}\ov{q}}
+ \cJ_{\ov{j}\,\ov{i}\ov{q}}\big)
dz_{r}\wedge d\ov{z}_{q}\\
-2\sqrt{-1}\Big\langle R^{TX}\Big(\frac{\p}{\p z_{j}},
\frac{\p}{\p\ov{z}_{i}}\Big)
\frac{\p}{\p z_{i}}, \frac{\p}{\p\ov{z}_{j}}\Big\rangle
\, \omega(x_{0})
+\frac{2}{3}\cJ_{jir}\big(\cJ_{\ov{j}\,\ov{i}\ov{q}}
+\cJ_{\ov{i}\,\ov{j}\ov{q}}\big)
dz_{r}\wedge d\ov{z}_{q} \\
=-2\sqrt{-1}\Big\langle R^{TX}\Big(\frac{\p}{\p z_{j}},
\frac{\p}{\p\ov{z}_{i}}\Big)
\frac{\p}{\p z_{i}}, \frac{\p}{\p\ov{z}_{j}}\Big\rangle
\, \omega(x_{0}).
\end{multline}
}
By \cite[Theorem\,8.3.4, Lemma\,8.3.10]{MM07},
\begin{align}\label{luw4.74}
8\Big\langle R^{TX}\Big(\frac{\p}{\p z_{j}},
\frac{\p}{\p\ov{z}_{i}}\Big)
\frac{\p}{\p z_{i}}, \frac{\p}{\p\ov{z}_{j}}\Big\rangle
=r^{X}
+\frac{1}{4}\big|\nabla^{X}J\big|^{2}
= 8\pi \bb_{1}(x_{0}).
\end{align}
The identities \eqref{4.86} and \eqref{luw4.74} yield
Theorem \ref{t3.1}. This concludes the proof
of Theorem \ref{t0.1}.

\end{document}